\newtheorem{theorem}{Theorem}
\newtheorem{lemma}{Lemma}
\newtheorem{corollary}{Corollary}
\newtheorem{proposition}{Proposition}
\newtheorem{rem}{Remark}
\newtheorem{prob}{Problem}
\newcommand{\until}[1]{\{1,\dots, #1\}}
\newcommand{\supscr}[2]{#1^{\textup{#2}}}
\newcommand{\eps}{\varepsilon}
\newcommand{\abs}[1]{\left|#1\right|}
\newcommand{\prt}[1]{\left(#1\right)}
\newcommand{\real}{\mathbb{R}}
\renewcommand{\natural}{\mathbb{N}}
\newcommand{\xeq}{\supscr{x}{eq}}
\newcommand{\xrand}{\supscr{x}{rand}}
\newcommand{\xrandm}{\supscr{x}{rand,$m$}}
\newcommand{\E}{\mathbb{E}} 
\providecommand{\Pr}{\mathbb{P}}
\title{Optimal one-dimensional coverage by unreliable sensors}
\author{Paolo Frasca\thanks{P. Frasca is with Department of Applied Mathematics, University of Twente, 7500 AE Enschede, The Netherlands
{\tt\small p.frasca@utwente.nl}.}%
\and Federica Garin\thanks{F. Garin is with NeCS team, INRIA Grenoble -- Rh\^one-Alpes, France        {\tt\small federica.garin@inria.fr
}.}%
\and Bal\'azs Gerencs\'er\thanks{B. Gerencs\'er and J. M. Hendrickx are with ICTEAM Institute,
  Universit\'e Catholique de Louvain, Belgium
  {\tt\small balazs.gerencser@uclouvain.be} and {\tt\small
    julien.hendrickx@uclouvain.be} 
Their work is supported by the DYSCO Network (Dynamical Systems,
Control, and Optimization), funded by the Interuniversity
Attraction Poles Programme, initiated by the  Belgian
Federal Science Policy Office, and by the Concerted Research Action (ARC) of the
French Community of Belgium.}%
\and Julien M. Hendrickx\footnotemark[3]
}
\begin{document}
\maketitle

\begin{abstract}
This paper regards the problem of optimally placing unreliable sensors in a one-dimensional environment. We assume that sensors can fail with a certain probability and we minimize the expected maximum distance from any point in the environment to the closest active sensor. We provide a computational method to find the optimal placement and we estimate the relative quality of equispaced and random placements. We prove that the former is asymptotically equivalent to the optimal placement when the number of sensors goes to infinity, with a cost ratio converging to 1, while the cost of the latter remains strictly larger.
\end{abstract}





\section{Introduction}

Sensor networks are used to monitor large or hazardous environments, for purposes ranging from oceanographic research to security in airports, industrial plants, and other complex infrastructures.
In order to provide the best {\em coverage} of the assigned environment, sensors have to be deployed at suitable locations. 
As sensors are prone to failures in collecting and transmitting data, the robustness of the obtained coverage performance is a natural concern: thus, we consider in this paper the problem of placing {\em unreliable} sensors in a given environment in order to provide the optimal coverage of it.

Coverage optimization and related problems of optimal facility location have been studied by the operations research community for a long time, often using concepts from geometric optimization and computational geometry~\cite{FPP-MIS:93,QD-VF-MG:99}. 
During the past decade, conditions for sensor networks to provide a certain level of coverage have been found in a variety of situations, which include both random and deterministic placement strategies~\cite{AG-SKD:08}. Many available results allow sensors to fail or to spend time in a sleeping mode to save energy: in fact, these two scenarios can be given a unified treatment~\cite{SK-THL-JB:04,SS-RS-NS:05} using probabilistic methods~\cite{PH:88,MF-RM:07}. However, it appears that the issue of the optimality of such placements, although recognized as central, has been left in the background~\cite{MY-KA:08}.

Control scientists have also become interested in these topics, after realizing that  feedback control can enable the autonomous deployment of self-propelled sensors~\cite{AH-MM-GS:02}. The main references for this research are the book~\cite{FB-JC-SM:09} and the related papers~\cite{JC-SM-TK-FB:02j,JC-FB:02m}, while very recent developments include~\cite{FB-RC-PF:08u,JRM-AW:13,JLN-GJP:13,NEL-AO:13}.
Most literature from the control community assumes sensors to behave reliably, but recent results are making clear that this assumption is not free from risks. In fact, sensor failures deteriorate the performance of the sensor network and it is not even clear if optimal solutions derived for the case without failures retain good properties in other cases. Indeed, simulations reported in~\cite{SH-TB:12} show the solutions that are optimal in the presence of failure are qualitatively different from those optimal in the fully reliable case.

The common sense countermeasure to failing sensors is adding some redundancy and letting more than one sensor ``responsible'' for covering a certain region of the environment, so that they can back up each other in case a failure occurs. To this aim, sensors can cluster into groups, such that the members of each group have the same location. This approach has been exploited by Cort\'es~\cite{JC:12}, under the assumption that the number of failed sensors is precisely {\em known}. As a consequence, the number of clusters in the optimal solution is directly determined by this number. 

In this paper, we consider the problem of {\em optimal disk-coverage in a one-dimensional environment by unreliable sensors}, under a probabilistic failure model that does not assume any {\it a priori} information about the number or the location of the failures. Rather, we assume that sensors fail independently and with the same probability. We then aim to minimize, in expectation, the largest distance between a point in the environment and an active sensor.

This cost function was already used in \cite{JC:12}, which was motivated by random field estimation~\cite{RG-JC:07}. It is consistent with the spirit of standard coverage questions in sensor networks, in which one is interested in guaranteeing a full coverage of the environment using a given number of sensors with a certain coverage radius~\cite{SK-THL-JB:04,HP:06}. Note that it also corresponds to the classical problem of facility location, where a number of facilities have to service customers in a given area and want to optimize the worst-case servicing delay~\cite[Ch.~2]{FB-JC-SM:09}.

Regarding the choice of the environment, most prior works about sensor networks have chosen two-dimensional settings. In contrast, our choice of working in dimension one allows us to achieve sharper characterizations and results about optimality, both asymptotical and for finite networks. Results of this kind are scarce in the literature, even if one-dimensional settings have often been studied, both in classical~\cite{PH:88} and recent works~\cite{NEL-AO:13,LL-BZ-JZ:13}.

Our first result -- Theorem~\ref{prop:linprog} -- states that the problem at hand is equivalent to a linear program, albeit with a number of variables growing exponentially with the number of sensors. This fact allows for a computational solution that is tractable if the number of sensors is not large. Secondly, we show that for large number of sensors $n$, the cost of the equispaced placement decreases to zero with leading term $\frac1{2\log{p^{-1}}}\frac{\log{n}}{n}$, where $p$ is the probability of failure. In Theorem~\ref{th:equispace-logn}, we provide analytic bounds on the optimal cost and prove that the equispaced placement is nearly optimal: the ratio between its cost and the optimal cost tend to 1 when $n$ grows. By
contrast, we show in Theorem~\ref{thm:random} that a random placement has a larger cost of order $\frac{1}{2(1-p)} \frac{\log n}{n}$. The almost optimality of the deterministic placement and its strict difference with the random placement had not been noticed before in the literature.

Our analysis also bear consequences for the failure model adopted by Cort\'es~\cite{JC:12}: for instance, we show that the equispaced  placement is nearly optimal in this case as well. Finally, we note that our results extend and refine those recently presented by some of the authors in~\cite{PF-FG:13}, where a similar model of unreliable coverage was proposed. 

\subsection*{Paper structure}
The rest of the paper is organized as follows. The formal definition of the
problem is presented in \S~\ref{sect:prob_def}. Translation
to a linear optimization problem is shown in
\S~\ref{sect:linprog}. In \S~\ref{sect:perf_equi} we
assess the performance of the equispaced placement. In
\S~\ref{sect:extreme_p} we analyze the special cases when the failure probabilities are close to 0 or to 1. \S~\ref{sect:random} deals with
the case of random sensor placement. In \S~\ref{sect:cortes} we
adapt our results to the failure model by Cort\'es. Conclusions are drawn in \S~\ref{sect:conclusion}.

\section{Problem definition}\label{sect:prob_def}%
We assume that we have a set of sensors indexed in $[n]=\until{n}$ which have to cover the interval $[0,1]$. Since sensors may fail, we consider for each placement $x\in [0,1]^n$ the coverage cost defined as the largest distance between a point in $[0,1]$ and its closest {\em active} (not failing) sensor.
To formalize this notion, we let $A$ denote the set of active sensors: we will use $\abs{A}$ to denote the cardinality of $A$ and $A_k$ to denote the $\supscr{k}{th}$ smallest index present in the set $A$, for $k=1,\dots,|A|$. We also call $x_A\in [0,1]^{|A|}$ the restriction of the vector $x$ to those entries for which the corresponding sensors are active. The cost incurred when the set of sensors $A$ is active is thus
\begin{equation}\label{eq:def_C0_xA}
C_0(x_A) = \max_{s \in [0,1]} \min_{j \in A} | s - x_j |.
\end{equation}
To be formally complete, we assign the arbitrary cost
$C_0(x_{\emptyset}) = 1$ to the situation where all sensors fail. This
convention has no effect when we seek to optimize the locations of the
sensors, as locations are irrelevant when they all fail. 
Observe that if no sensor fails ($A=[n]$), then the cost~\eqref{eq:def_C0_xA} reduces to
\[ C_0(x) = \max_{s \in [0,1]} \min_{j \in [n]} | s - x_j |. \]
In this case, it is known that the equispaced placement of $n$ sensors, namely
\begin{equation}\label{eq:defxeq}
\xeq=\frac1{2n}(1, 3, \dots, 2n-1),
\end{equation}
is the optimal solution and achieves a cost $C_0(\xeq)=\frac1{2n}$.
Since we assume that failures are random, we define the event $E_A = \{ \text{$A$ is the set of active sensors} \}$ and we consider the {\em expected value} of the cost $C_0$, which is 
\begin{equation}\label{eq:def-average-cost}
C(x) =   \sum_{A \subseteq{[n]} } \Pr(E_A) C_0(x_A),
\end{equation}
where $\Pr(E_A)$ is the probability of $E_A$.
In the rest of this paper, with the exception of \S~\ref{sect:cortes}, we assume that each sensor fails with probability $p$, independently from the others. Consequently, 
\begin{equation}
\label{eq:prob-A-p}\Pr(E_A)=p^{n-\abs{A}}(1-p)^{\abs{A}}. \end{equation}
We are then ready to formally state our optimization problem.
\begin{prob}[Independent failures]\label{prob:mainprob}
For given $p\in (0,1)$ and $n\in \natural$, find $x^*\in [0,1]^n$ that minimizes
the cost~\eqref{eq:def-average-cost} with~\eqref{eq:prob-A-p}.
\end{prob}

In what follows we assume, for simplicity and without loosing generality, that $x$ is ordered $x_1 \le x_2 \le \dots\le x_n$. This assumption implies that 
\begin{equation}\label{eq:sorted_C0_A}
C_0(x_A)=  \max \left\{ x_{A_1}, 1-x_{A_{\abs{A}}},  \max_{k=1, \dots, \abs{A} -1} \frac{1}{2}(x_{A_{k+1}}-x_{A_k})\right\}.
\end{equation}

\section{Formulation as a linear program}\label{sect:linprog}

A solution of Problem~\ref{prob:mainprob} can be numerically computed by means 
of the following result,  that shows its equivalence to a suitable linear program.

\begin{theorem}[Linear program]\label{prop:linprog}
Let $n\in \natural$ and $p\in (0,1)$. The (ordered) vector $x^*\in [0,1]^n$ is an optimal solution of Problem~\ref{prob:mainprob} if and only if there exists a vector $w^*\in \real^{{2^n}-1} $ such that $(x^*,w^*)$ is an optimal solution to the following linear program:
\begin{align}
\min &\sum_{A\neq\emptyset} \Pr(E_A)w_A 
\label{obj_in_thm_linprog}
\\
s.t. \nonumber\\
 &0\leq x_1\leq \dots \leq x_n\leq 1, \label{constr:sorted_in_thm_linprog}\\
\text{and } &\forall\, A \subseteq [n],~A\neq\emptyset,\nonumber\\
&w_A \geq \frac{1}{2}(x_{A_{k+1}}- x_{A_k}), \hspace{.2cm} \text{for } k =1,\dots,|A|-1, \label{constr:def_cost_normal_in_thm_linprog}\\
&w_A \geq x_{A_1},w_A \geq 1-x_{A_{|A|}}.\label{constr:def_cost_boundary_in_thm_linprog}
\end{align}
\end{theorem}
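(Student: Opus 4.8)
The statement is an instance of the standard epigraph reformulation of a minimax objective, so the plan is to show that the auxiliary variables $w_A$ can, at optimality, be identified with the cluster costs $C_0(x_A)$, after which the linear program reduces to Problem~\ref{prob:mainprob} up to an irrelevant additive constant.

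First I would fix an ordered vector $x$ with $0\le x_1\le\dots\le x_n\le 1$ and examine, for a single nonempty $A\subseteq[n]$, the constraints \eqref{constr:def_cost_normal_in_thm_linprog}--\eqref{constr:def_cost_boundary_in_thm_linprog}: they say exactly that $w_A$ is an upper bound for each of the numbers $x_{A_1}$, $1-x_{A_{\abs{A}}}$ and $\tfrac12(x_{A_{k+1}}-x_{A_k})$, $k=1,\dots,\abs{A}-1$. By the sorted expression~\eqref{eq:sorted_C0_A} for the cost, this is equivalent to $w_A\ge C_0(x_A)$, and the value $w_A=C_0(x_A)$ is feasible. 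Since the constraints for distinct $A$ involve disjoint sets of $w$-variables, the feasible set in $w$ (for fixed $x$) is the product over nonempty $A$ of the half-lines $[C_0(x_A),+\infty)$.

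Next I would use that $\Pr(E_A)=p^{n-\abs{A}}(1-p)^{\abs{A}}>0$ for every $A\neq\emptyset$, because $p\in(0,1)$: hence, for fixed $x$, the objective $\sum_{A\neq\emptyset}\Pr(E_A)w_A$ is minimized precisely by $w_A=C_0(x_A)$ for all $A$, with minimum value $\sum_{A\neq\emptyset}\Pr(E_A)C_0(x_A)$. By~\eqref{eq:def-average-cost} and the convention $C_0(x_\emptyset)=1$, this equals $C(x)-\Pr(E_\emptyset)=C(x)-p^n$. Minimizing further over the ordered $x\in[0,1]^n$ — which, by the remark preceding~\eqref{eq:sorted_C0_A}, is no loss of generality for Problem~\ref{prob:mainprob} — shows that the optimal value of the linear program is $\bigl(\min_x C(x)\bigr)-p^n$, and that an ordered $x^*$ attains $\min_x C(x)$ if and only if $(x^*,w^*)$ with $w^*_A=C_0(x^*_A)$ attains the linear-program optimum. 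For the remaining direction, if $(x^*,w^*)$ is any optimal solution of the linear program, then $\sum_{A\neq\emptyset}\Pr(E_A)w^*_A\ge\sum_{A\neq\emptyset}\Pr(E_A)C_0(x^*_A)=C(x^*)-p^n\ge\bigl(\min_x C(x)\bigr)-p^n$, and since the left-hand side equals the optimal value, both inequalities are equalities, forcing $C(x^*)=\min_x C(x)$; this gives the ``if'' part of the theorem, while the computation above gives the ``only if'' part.

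I do not expect a serious obstacle here: the argument is essentially bookkeeping. The two points that require a little care are (i) the strict positivity $\Pr(E_A)>0$, which is what actually pins each $w_A$ down to $C_0(x_A)$ at optimality and hence makes the equivalence of minimizers hold in both directions; and (ii) the empty-set term, which contributes the fixed cost $\Pr(E_\emptyset)\,C_0(x_\emptyset)=p^n$ to $C(x)$ but is absent from the linear-program objective, so one must only note that this constant shift does not affect the set of minimizers. Existence of the minima (continuity of $C_0$, hence of $C$, on the compact set of ordered placements; and the fact that the linear program, being a minimization with positive cost coefficients, may be restricted to the compact region $w_A\in[0,1]$) is immediate and can be dispatched in a line.
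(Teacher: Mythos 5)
Your proposal is correct and follows essentially the same route as the paper's proof: drop the constant $\Pr(E_\emptyset)$ term, introduce the epigraph variables $w_A\ge C_0(x_A)$, and split each such constraint into the linear inequalities via the sorted expression~\eqref{eq:sorted_C0_A}. The only (harmless) difference is that you invoke the strict positivity $\Pr(E_A)>0$ to pin each $w_A$ down to $C_0(x_A)$ at optimality, whereas the paper only needs $\Pr(E_A)\ge 0$ to conclude that the optimal values and the sets of optimal $x$ coincide.
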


\begin{proof}
As the constant term $\Pr(E_{\emptyset})$ can be ignored when looking
for the $x$ minimizing $C(x)$, Problem~\ref{prob:mainprob} is
equivalent to $$\min_{x_1\leq \dots \leq x_n}\sum_{A \subseteq [n],A\neq\emptyset} \Pr(E_A) C_0(x_A).$$ Since $\Pr(E_A)\geq 0$ for every $A$, this problem is in turn equivalent to 
$$
\min_{x_1\leq \dots \leq x_n}\sum_{A \subseteq [n],A\neq\emptyset} \Pr(E_A) w_A \text{     s.t.   } w_A \geq C_0(x_A) \text {   for every   } A\neq \emptyset,
$$
that is, to~\eqref{obj_in_thm_linprog} under the constraints~\eqref{constr:sorted_in_thm_linprog} and $w_A \geq C_0(x_A)$ for every $A\neq \emptyset$.
Thanks to~\eqref{eq:sorted_C0_A}, the constraint $w_A \geq C_0(x_A)$ can be separated in 
$w_A \geq x_{A_1}$, $w_A \geq (1-x_{A_{\abs{A}}})$, and $w_A \geq \frac{1}{2} ( 
x_{A_{k+1}}-x_{A_k}
)$ for $k=1\dots,\abs{A}-1$, that is, in~\eqref{constr:def_cost_normal_in_thm_linprog} and~\eqref{constr:def_cost_boundary_in_thm_linprog}, which achieves our proof.
\end{proof}

\medskip

The formulation as a linear program implies that the optimal solution corresponds to one of the vertices of the polytope defined by the constraints. Unfortunately, the number of such constraints is exponentially large in the number of sensors and thus the program becomes quickly intractable.
Nevertheless, we are able to calculate the optimal placements as long as $n$ is not too large. In Figure~\ref{fig:sim12} we
illustrate the evolution of the optimal placement for Problem~\ref{prob:mainprob} as a function of $p$.
We can see that the dependence on $p$ is rather complex and it is not clear how, or if, one could provide a simple exact description of the optimal location of the sensors as a function of $n$ and $p$. 
 Still, in \S~\ref{sect:extreme_p} we will show that the
  equispaced placement is optimal when $p$ is near 0 and a single
  cluster at $1/2$ is optimal when $p$ is near 1.

Observe that the the optimal $x$ is a piecewise constant function of $p$. This feature can actually be explained by the structure of the linear program in Theorem~\ref{prop:linprog}. Indeed, one can see that the constraints do not depend on $p$, which only affects the cost function. 
For any $p$, one can thus always find an optimal $(x^*,w^*)$ among the finitely many vertices of the polytope defined by these constraints. It is therefore natural to observe only finitely many different optimal solutions.

\begin{figure}[ht]
\psfrag{x}{$x$}
\psfrag{p}[][][1][0]{$p$}
\centering
\includegraphics[width=0.8\columnwidth]{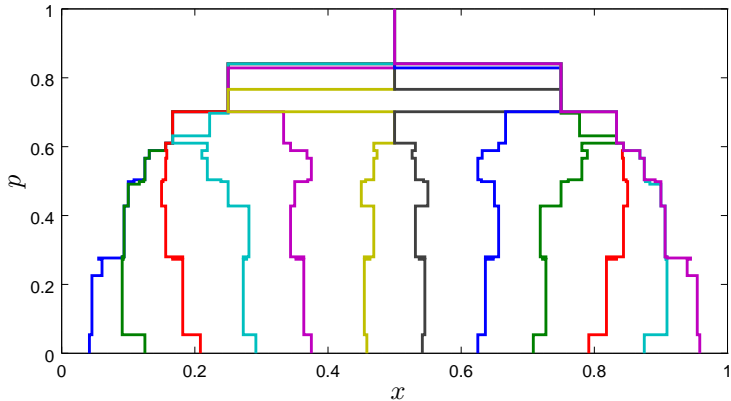}
\caption{Optimal sensor placement for Problem~\ref{prob:mainprob} for $n=12$ sensors and varying $p$.}
\label{fig:sim12}
\end{figure}

\section{Performance of the equispaced placement}\label{sect:perf_equi}
\label{sec:perf_equidist}
The difficulty of providing explicit formulas or efficient computational methods to solve Problem~\ref{prob:mainprob} motivates us to investigate the properties of simple near-optimal solutions. We concentrate on the equispaced placement, which we have seen to be optimal in case of no failures, achieving a cost $C_0(\xeq)=\frac1{2n}$. In the case of positive failure probability, we can prove that the cost of the equispaced placement is nearly optimal. 
\smallskip

\begin{theorem}[Cost of equispaced]\label{th:equispace-logn}
Let $p\in (0,1)$ and let $x^*$ denote the optimal placement for this $p$. Then,
\begin{equation}
 C(\xeq)=\frac1{2\log{p^{-1}}}\frac{\log{n}}{n} + O\!\left(\frac{1}{n}\right) \qquad \text{for $n\to\infty$}
\label{eq:exp-equally-theorem_simple} \end{equation}
and for every $n\in \natural$
\begin{equation}
\label{eq:almost-optimal-bound}
C(\xeq)\le C(x^*) + \frac{p}{1-p} \frac2{n}. 
\end{equation}
\end{theorem}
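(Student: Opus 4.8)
My plan is to establish the two displayed formulas separately, both through a description of the cost of $\xeq$ in terms of runs of consecutive failing sensors. For $\xeq$ the consecutive sensors lie at distance $1/n$ and each endpoint of $[0,1]$ at distance $1/(2n)$ from its closest sensor, so by \eqref{eq:sorted_C0_A}, for $A\neq\emptyset$, $C_0(\xeq_A)=\tfrac1{2n}\max\{2L+1,\,2R+1,\,M'+1\}$, where $L$ (resp.\ $R$) is the number of failing sensors before the first (resp.\ after the last) active one and $M'$ the length of the longest run of consecutive failing sensors lying strictly between two active ones; also $C_0(\xeq_\emptyset)=1$. Let $M$ be the length of the longest run of failing sensors among all $n$ of them. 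On $\{A\neq\emptyset\}$ one has $M+1\le\max\{2L+1,2R+1,M'+1\}\le M+1+2L+2R$ (the left inequality since $\max\{L,R,M'\}=M$, the right one trivially). Since $L,R$ are stochastically dominated by geometric variables, $\E L,\E R\le p/(1-p)$, and $C_0(\xeq_\emptyset)p^n=p^n=O(1/n)$, we get $C(\xeq)=\tfrac1{2n}\E M+O(1/n)$. To conclude I would invoke $\E M=\log_{1/p}n+O(1)$: the union bound $\Pr(M\ge k)\le np^k$ over the $\le n$ possible starting positions of a run gives the upper estimate, and the lower estimate follows from the second-moment inequality applied to the count $W_k$ of positions at which a run of length $\ge k$ starts, namely $\Pr(M<k)=\Pr(W_k=0)\le\operatorname{Var}(W_k)/(\E W_k)^2=O\!\bigl(1/((n-k+1)p^k)\bigr)$, which is summable over $1\le k\le\lfloor\log_{1/p}n\rfloor$. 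As $\log_{1/p}n=\log n/\log p^{-1}$, this is \eqref{eq:exp-equally-theorem_simple}.

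For \eqref{eq:almost-optimal-bound} I would compare with the same problem posed on a \emph{circle} of circumference $1$. For an ordered $x$ let $C_0^{\circ}(x_A)$ be half the length of the longest arc between two cyclically consecutive active sensors (and $C_0^{\circ}(x_\emptyset)=1$), with expectation $C^{\circ}(x)=\sum_A\Pr(E_A)C_0^{\circ}(x_A)$. Relative to \eqref{eq:sorted_C0_A}, the only change is that the two boundary terms $x_{A_1}$ and $1-x_{A_{|A|}}$ are replaced by the single term $\tfrac12\bigl(x_{A_1}+(1-x_{A_{|A|}})\bigr)$; since $\tfrac12(a+b)\le\max\{a,b\}$ this gives $C_0^{\circ}(x_A)\le C_0(x_A)$ for every $A$, hence $C^{\circ}(x)\le C(x)$ for every $x$.

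Next I would argue that $\xeq$ minimizes $C^{\circ}$ over all placements. Indeed $C^{\circ}(x)$ depends on $x$ only through the vector $\gamma$ of the $n$ cyclic gaps, which ranges over the simplex $\{\gamma\ge0:\sum_i\gamma_i=1\}$; there $C^{\circ}$ is a nonnegative combination of maxima of linear functions, hence convex, and it is invariant under a rotation of the circle, i.e.\ under a cyclic shift of the entries of $\gamma$ (the random active set being rotation-invariant in law because $\Pr(E_A)$ depends only on $|A|$). Averaging a given $\gamma$ over its $n$ cyclic shifts and using convexity, one sees that the minimum of $C^{\circ}$ is attained at the barycenter $\gamma=(\tfrac1n,\dots,\tfrac1n)$, the only cyclic-shift-invariant point; and this is exactly the cyclic-gap vector of $\xeq$ (consecutive gaps $1/n$, wrap-around gap $\tfrac1{2n}+\tfrac1{2n}=\tfrac1n$). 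Therefore $C^{\circ}(\xeq)=\min_x C^{\circ}(x)\le C^{\circ}(x^*)\le C(x^*)$.

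Finally I would bound $C(\xeq)-C^{\circ}(\xeq)$. With the notation above, and since the $A=\emptyset$ contributions cancel, $C(\xeq)-C^{\circ}(\xeq)=\tfrac1{2n}\E\bigl[\bigl(\max\{2L+1,2R+1,M'+1\}-(M^{\circ}+1)\bigr)\mathbf{1}_{A\neq\emptyset}\bigr]$, where $M^{\circ}=\max\{M',L+R\}$ is the longest run of failing sensors read cyclically. A short case check — the difference is $0$ when $M'+1$ is the larger of the two maxima, and otherwise, with $2L+1$ or $2R+1$ the larger, it is at most $\max\{2L+1,2R+1\}-(L+R+1)=|L-R|$ — shows it never exceeds $\max\{L,R\}\le L+R$ pointwise, so $C(\xeq)-C^{\circ}(\xeq)\le\tfrac1{2n}(\E L+\E R)\le\tfrac1{2n}\cdot\tfrac{2p}{1-p}=\tfrac{p}{(1-p)n}$; combined with the previous paragraph this yields \eqref{eq:almost-optimal-bound} (indeed with the slightly better constant $p/((1-p)n)$). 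The one genuinely delicate point of the whole argument is the lower bound $\E M\ge\log_{1/p}n-O(1)$ in the first step: a naive estimate of $\Pr(M\ge k)$ from below via disjoint blocks only gives $\E M\ge\log_{1/p}n-O(\log\log n)$, which would weaken the error term in \eqref{eq:exp-equally-theorem_simple} to $O(\log\log n/n)$, so the second-moment computation (or a Chen--Stein bound) is really needed there; everything else is elementary.
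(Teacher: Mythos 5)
Your proposal is correct, and its overall architecture coincides with the paper's: \eqref{eq:exp-equally-theorem_simple} is obtained from the longest run of consecutive failures plus truncated-geometric corrections for the two boundary runs, and \eqref{eq:almost-optimal-bound} from a comparison with the circle version of the problem, for which the equispaced placement is optimal, together with a bound on the line-versus-circle cost gap of $\xeq$. The differences are local but real. For the run-length asymptotics the paper imports a sharp extreme-value result (Lemma~\ref{lemma:Rn-properties}, from Gordon--Schilling--Waterman), of which only the leading term and an $O(1)$ remainder are actually used; you instead prove $\E M=\log_{1/p}n+O(1)$ from scratch, with a union bound for the upper estimate and a second-moment (Chebyshev) bound on the number of length-$k$ failure blocks for the lower one, and your computation is sound: the error terms over $k\le\lfloor\log_{1/p}n\rfloor$ grow geometrically with the last one $O(1)$, so their sum is $O(1)$ and the citation is genuinely dispensable. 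For optimality on the circle, the paper argues inside the linear-program formulation and averages over rotations (Proposition~\ref{prop:equidist_optimal_cycle}); your convexity-in-the-gap-vector argument is the same rotation-averaging idea without the LP language, and you rightly skip the uniqueness claim, which is not needed for this theorem. Finally, your pointwise case analysis $\max\{2L+1,2R+1,M'+1\}-(M^{\circ}+1)\le|L-R|\le\max\{L,R\}$ sharpens the paper's Lemma~\ref{lem:bound_eq_line-eq_circle} by a factor of $2$, yielding $C(\xeq)\le C(x^*)+\frac{p}{(1-p)n}$, which of course implies the stated bound \eqref{eq:almost-optimal-bound}.
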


\smallskip
Equation~\eqref{eq:exp-equally-theorem_simple} is illustrated in Figure~\ref{fig:rand-vs-equi} (\S~\ref{sect:random}).
A few relevant observations follow from this theorem:  (i) the order of growth of $C(\xeq)$ is only worse than the order of $C_0(\xeq)$ by a logarithmic factor; (ii) $\xeq$ asymptotically achieves the optimal cost, since $\frac{C(\xeq)}{C(x^*)}\to 1$; and (iii) the difference in cost between $\xeq$ and the optimum can be estimated at finite $n$, too. Consequently, the equispaced placement can be seen as a valid heuristic solution, when finding an exact solution proves to be intractable.

The rest of this section is devoted to prove Theorem~\ref{th:equispace-logn}.  We first prove equation~\eqref{eq:exp-equally-theorem_simple} in \S~\ref{sect-proof-paolo}: its proof is based on classical results about the properties of the runs of consecutive ones in sequences of Bernoulli trials.
Next, in \S~\ref{sec:line-circle} we  prove~\eqref{eq:almost-optimal-bound}; the proof of this formula relies on an alternative version of Problem~\ref{prob:mainprob} defined on the circle, for which the equispaced solution is actually optimal.

\subsection{Longest runs of failures and proof of~\eqref{eq:exp-equally-theorem_simple}}
\label{sect-proof-paolo}

Let $R_n$ be the maximum number of sensors which fail ``in a row'', {\it i.e.}, the length of the longest run of failures over $n$ sensors.
The random variable $R_n$ is closely related to the cost, as we detail below. 
On the other hand, the distribution of $R_n$ and its asymptotic behavior for large $n$ are well studied in the literature,
due to their relevance in combinatorics~\cite{PE-PR:75}. The following lemma, taken from~\cite{LG-MS-MW:86}, characterizes the asymptotic behavior of $\E[R_n]$.
\begin{lemma}\label{lemma:Rn-properties}
Let $R_n$ be defined as above and $p\in (0,1)$.
Then, for $n\to\infty$,
$$
\E[R_n]=\frac1{\log{p^{-1}}}\log{n}
		 +\frac{\log(1\!-\!p)}{\log{p^{-1}}}
		 +\frac\gamma{\log{p^{-1}}}
		 -\frac12+r_{p}(n)+o(1) \,,
$$
where $\gamma$ is the Euler-Mascheroni constant and
$r_{p}(n)$ is a periodic function which remains bounded and, more precisely, satisfies for all $n$
$$
|r_{p}(n)| \le  \frac1{2\pi}\sqrt{\theta} \frac{e^{-\theta}}{(1-e^{-\theta})^2}\, \qquad \text{with $\theta = \frac{\pi^2}{\log{p^{-1}}}$.}
$$
\end{lemma}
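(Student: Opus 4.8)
This statement is classical and is quoted from the cited reference; here is how I would establish it. Write $q=1-p$ and $a=\log p^{-1}>0$, so a block of $\ell$ consecutive failures occurs with probability $p^{\ell}$. Since $R_n$ takes values in $\integernonnegative$,
\[
\E[R_n]=\sum_{m\ge 0}\Pr(R_n> m)=\sum_{m\ge 0}\bigl(1-\Pr(R_n\le m)\bigr),
\]
where $\{R_n> m\}$ is precisely the event that some block of $m+1$ consecutive sensors all fail. The plan is: (i) replace $\Pr(R_n\le m)$ by the surrogate $e^{-nq\,p^{m+1}}$ with an error summing over $m$ to $o(1)$; (ii) recognise $\sum_{m\ge0}\bigl(1-e^{-nq\,p^{m+1}}\bigr)=f(nq)$, where $f(t):=\sum_{j\ge1}\bigl(1-e^{-tp^{j}}\bigr)$, as a classical harmonic sum; and (iii) expand $f(nq)$ for large $n$, extracting the leading term, the constants, and the periodic fluctuation.

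For step (i) I would apply a Chen--Stein Poisson bound to the declumped indicators $\tilde I_i=\mathbf{1}\{\text{sensor }i-1\text{ active (or }i=1\text{), and sensors }i,\dots,i+m\text{ all fail}\}$, $i=1,\dots,n-m$: then $\{R_n> m\}=\{\sum_i\tilde I_i\ge 1\}$, two such indicators with overlapping index windows cannot both equal $1$ (so the dependence is purely local), and $\E\bigl[\sum_i\tilde I_i\bigr]=p^{m+1}+(n-m-1)q\,p^{m+1}$. The bound then yields $\bigl|\Pr(R_n\le m)-e^{-\lambda_m}\bigr|=O(m\,p^{m})$ uniformly, with $\lambda_m:=nq\,p^{m+1}$ (this also absorbs the $O(m\,p^m)$ correction from $\lambda_m$ to the exact Poisson mean). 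Summing over $m$ is then handled by splitting at $m\approx\log_{1/p}n$: for $m\le(1-\delta)\log_{1/p}n$ both $\Pr(R_n\le m)$ and $e^{-\lambda_m}$ are $o(1/n)$, so the two partial sums agree to $o(1)$; for $m\ge(1+\delta)\log_{1/p}n$ both $\Pr(R_n> m)$ and $1-e^{-\lambda_m}$ are $O(np^{m+1})$, with sums $o(1)$; and in the transition window, of width $O(\log n)$, the per-term error is $O(n^{-1+\delta}\log n)$, so the window contributes $o(1)$. This gives $\E[R_n]=f(nq)+o(1)$.

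For steps (ii)--(iii), $f$ has Mellin transform $\tilde f(s)=-\Gamma(s)\,\dfrac{e^{as}}{1-e^{as}}$, analytic in the strip $-1<\Re s<0$. Shifting the inversion contour to $\Re s=c'>0$ leaves an error $O(t^{-c'})$ and collects the residues at $s=0$ and at $s=\chi_k:=2\pi i k/a$, $k\in\integer\setminus\{0\}$. At the double pole $s=0$, from $\Gamma(s)=s^{-1}-\gamma+O(s)$ and $\dfrac{e^{as}}{1-e^{as}}=-\dfrac1{as}-\dfrac12+O(s)$ one gets $\tilde f(s)=\dfrac1{as^{2}}+\dfrac1s\bigl(\tfrac12-\tfrac{\gamma}{a}\bigr)+O(1)$, whose contribution to $f(t)$ is $\dfrac{\log t}{a}+\dfrac{\gamma}{a}-\dfrac12$; with $t=nq$ and $a=\log p^{-1}$ this is exactly $\dfrac{\log n}{\log p^{-1}}+\dfrac{\log(1-p)}{\log p^{-1}}+\dfrac{\gamma}{\log p^{-1}}-\dfrac12$, the subtle $-\tfrac12$ being the part of the residue produced by the $-s\log t$ term in $t^{-s}$ (equivalently a jump term in a Poisson-summation derivation). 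The simple poles at $\chi_k$ contribute the mean-zero fluctuation $r_p(n)=-a^{-1}\sum_{k\ne0}\Gamma(\chi_k)(nq)^{-\chi_k}$, a Fourier series of period $1$ in $\log_{1/p}(nq)$; the non-periodic remainder goes into the $o(1)$. Since $\bigl|(nq)^{-\chi_k}\bigr|=1$, $|r_p(n)|\le a^{-1}\sum_{k\ne0}|\Gamma(\chi_k)|$, and inserting the classical identity $|\Gamma(iy)|^{2}=\dfrac{\pi}{y\sinh(\pi y)}$ followed by an elementary geometric estimate of the resulting series gives the stated closed-form bound with $\theta=\pi^{2}/\log p^{-1}$.

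The main obstacle is step (i): the Poisson-approximation error must be made \emph{genuinely $o(1)$ after summation over all $m$}, not merely $O(1)$, since the quantity being pinned down --- the constant $\frac{\gamma}{\log p^{-1}}-\frac12$ together with the $O(1)$-amplitude periodic term --- is itself of constant order; this is exactly what forces the regime-by-regime estimate above. An alternative that bypasses the Poisson approximation is to use the exact generating function $\sum_{n}\Pr(R_n\le m)\,z^{n}$, which is rational, and to control its dominant and subdominant singularities as functions of $m$; this produces the same surrogate with explicit error at comparable cost. Step (iii), by contrast, is a routine singularity analysis once the reduction to $f(nq)$ is in place.
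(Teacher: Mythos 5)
You should first be aware that the paper does not prove this lemma at all: it is imported verbatim from the cited reference \cite{LG-MS-MW:86} (Gordon--Schilling--Waterman), so your argument is to be compared with that classical source rather than with an in-paper proof. Your route is the standard one and is essentially sound: the survival-sum representation $\E[R_n]=\sum_{m\ge0}\Pr(R_n>m)$, the declumped Chen--Stein approximation of $\Pr(R_n\le m)$ by $e^{-nq\,p^{m+1}}$ with a regime-split control of the summed error, and the Mellin/harmonic-sum analysis of $f(nq)=\sum_{j\ge1}(1-e^{-nq\,p^{j}})$. I checked the key computations: $\tilde f(s)=-\Gamma(s)\,e^{as}/(1-e^{as})$ on $-1<\Re s<0$, the double pole at $s=0$ yielding $\frac{\log t}{a}+\frac{\gamma}{a}-\frac12$, which with $t=nq$ and $a=\log p^{-1}$ gives exactly the three constants of the statement, and the imaginary poles yielding the period-one fluctuation $-a^{-1}\sum_{k\ne0}\Gamma(2\pi i k/a)(nq)^{-2\pi i k/a}$. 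Two small inaccuracies are harmless: the Chen--Stein error is not $O(mp^m)$ \emph{uniformly} in $m$ (for $m$ far below $\log_{1/p}n$ it is rather $O(nmp^{2m})$), but your regime splitting never uses that bound there; and the $-\frac12$ actually arises from the constant term of $e^{as}/(1-e^{as})$ meeting the $1/s$ of $\Gamma(s)$, not from the $-s\log t$ term of $t^{-s}$.

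The one genuine gap is your final claim that an ``elementary geometric estimate'' of $\sum_{k\ne 0}|\Gamma(2\pi ik/a)|$ gives the stated closed-form bound on $|r_p(n)|$. It does not. From $|r_p|\le \frac{2}{a}\sum_{k\ge1}|\Gamma(2\pi ik/a)|$ and $|\Gamma(iy)|^2=\pi/(y\sinh\pi y)$, the geometric summation yields a bound of the order $\frac{2\sqrt{\theta}}{\pi}\,\frac{e^{-\theta}}{1-e^{-\theta}}$, about four times the quoted $\frac{\sqrt{\theta}}{2\pi}\frac{e^{-\theta}}{(1-e^{-\theta})^2}$. Worse, the quoted value cannot be reached by this route at all: at $p=1/2$ the $k=\pm1$ harmonics of your Fourier series alone have amplitude $\frac{2}{\ln 2}\,|\Gamma(2\pi i/\ln 2)|\approx 1.6\times 10^{-6}$, which already exceeds the quoted bound $\approx 3.9\times 10^{-7}$, so the sharper constant must be taken, with its precise normalisation of $r_p$, from the original source rather than rederived by your estimate. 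If you want a self-contained argument, either reproduce the bound of \cite{LG-MS-MW:86} or state the weaker explicit bound you can actually prove; note that the only property of $r_p(n)$ the paper uses downstream (in the proof of \eqref{eq:exp-equally-theorem_simple}) is boundedness, i.e.\ $\E[R_n]=\frac{\log n}{\log p^{-1}}+O(1)$, which your argument fully delivers.
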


Recall that we denote by $A$ the set of active sensors and that the sensors are sorted according to their location.
The cost $C(\xeq)$ is tightly related with the lengths of runs of failures, and in particular the maximum run-length.
For a given set $A$ of active sensors, denote by $R$ the longest run-length of failures for that set $A$ (elements of $[n]$ not in $A$): $R$ is thus the realization of $R_n$ corresponding to $A$.
Notice that if $1 \in A$ and $n \in A$, then the coverage cost
is precisely determined by the longest run of failures, since
$ C_0(\xeq_A) = \frac{R+1}{2n} $.
However, when a failure occurs in sensors $1$ or $n$ (or both), the runs of failures involving border sensors contribute to the cost by a larger amount.
Denote by $L_{\mathrm{i}}$ and $L_{\mathrm{f}}$ the lengths of the runs of failures involving the initial sensor 1 and the final sensor $n$, respectively,
namely,  $L_{\mathrm{i}} = A_1 -1$ and $L_{\mathrm{f}} = n-A_{|A|}$ for $A \ne \emptyset$, and $L_{\mathrm{i}} = L_{\mathrm{f}} =  n$ for $A = \emptyset$.
Now notice that, for all $A \ne \emptyset$,
\[ C_0(\xeq_A) = \max \left\{ \frac{R+1}{2n} , \frac{2 L_{\mathrm{i}}+1}{2n} ,  \frac{2 L_{\mathrm{f}}+1}{2n} \right \} \,.\]
For the case where $A = \emptyset$, recall that $C_0(\xeq_{\emptyset}) = 1$.
Hence, for all $A$, we have the following bounds:
\[ C_0(\xeq_A) \ge  \frac{R+1}{2n} \]
and
\[ C_0(\xeq_A) \le \max \left\{ \frac{R+1}{2n} , \frac{2 L_{\mathrm{i}}+1}{2n} ,  \frac{2 L_{\mathrm{f}}+1}{2n} \right \} 
\le \frac{R+1}{2n} + \frac{2 L_{\mathrm{i}}+1}{2n} +  \frac{2 L_{\mathrm{f}}+1}{2n} \,.\]

The bounds on the averaged cost $C(\xeq)$ are then obtained by taking the expectation.
Notice that, with the failure model from Problem~\ref{prob:mainprob}
the maximum run-length $R$ is the above-described random variable $R_n$, and hence its average satisfies Lemma~\ref{lemma:Rn-properties}.
For the initial and final run-lengths, they are truncated geometric r.v.'s, in the following sense.
Let $X$ be a geometric r.v. of parameter $p$, namely $\Pr(X=k)=p^k (1-p)$.
Now notice that $\Pr(L_{\mathrm{i}} =k)$ and $\Pr(L_{\mathrm{f}} =k)$ are equal to $\Pr(X=k)$ for $k <n$, to $\Pr(X\ge n)$ for $k =n$ and to $0$ for larger $k$,
so that $\E L_{\mathrm{i}} = \E L_{\mathrm{f}} \le \E X = \frac{p}{1-p}$.

We can now conclude the proof: for the lower bound
\[ C(\xeq) \ge  \frac{\E R_n+1}{2n}  = 
\frac{1}{2n} \left(\frac{1}{\log{p^{-1}}} \log{n} +O(1)\right) \quad \text{for $n\to\infty$} \,,\]
while for the upper bound
\[ C(\xeq) \le \frac{\E R_n+1}{2n} + 2 \frac{2 \E X +1}{2n} = 
\frac{1}{2n} \left(\frac{1}{\log{p^{-1}}} \log{n} +O(1)\right) \qquad \text{for $n\to\infty$} \,.\]

\subsection{Coverage on a circle and proof of~\eqref{eq:almost-optimal-bound}}
\label{sec:line-circle}

In order to complete the proof of Theorem~\ref{th:equispace-logn} we
introduce a proxy model. Instead of covering the unit interval, this
time we attempt to find a good coverage on a circle with circumference~1.
If we represent the locations by values in $[0,1]$, this means that the distance between two points $x,y\in [0,1]$ is $\min(\abs{y-x}, 1-\abs{y-x})$.
Employing this distance to determine the cost as in~\eqref{eq:def_C0_xA} leads to define the following problem.
\begin{prob}[Independent failures -- Circle]\label{prob:circle}
For given $p\in (0,1)$ and $n\in \natural$, find $x\in [0,1]^n$ that minimizes
$
\tilde C(x) = \sum_{A \subseteq[n]} \Pr(E_A) \tilde C_0(x_A),
$
where $\Pr(E_A)=p^{n-\abs{A}}(1-p)^{\abs{A}}$ and 
\begin{equation}\label{eq:sorted_C0_circle}
\tilde C_0(x) = \max \left\{\frac{1}{2} (1 - x_n + x_1),  \max_{i=1, \dots, n-1} \frac{1}{2}(x_{i+1}-x_i).\right\}
\end{equation}
\end{prob}

Problem~\ref{prob:circle} can also be formulated as a linear problem; a result similar to Theorem~\ref{prop:linprog} with a minor modification to constraints~\eqref{constr:def_cost_boundary_in_thm_linprog} can be proved exactly in the same way.
\begin{corollary}[Linear program -- Circle]
Let $n\in \natural$ and $p\in (0,1)$. The (ordered) vector $x^*\in [0,1]^n$ is the optimal solution of Problem~\ref{prob:circle} if and only if there exists a vector $w^*\in \real^{{2^n}-1} $ such that $(x^*,w^*)$ is an optimal solution to the following linear program:
\begin{align}\nonumber
\min &\sum_{A\neq\emptyset} \Pr(E_A)w_A \\
s.t.\nonumber\\
 &0\leq x_1\leq x_2\leq \dots \leq x_n\leq 1,\nonumber\\
\text{and } &\forall A \subseteq [n],~A\neq \emptyset,\nonumber\\
&w_A \geq \frac{1}{2}(x_{A_{k+1}}- x_{A_k}), \hspace{.2cm} \text{for } k =1,\dots,|A|-1,\nonumber\\
&w_A \geq \frac{1}{2} (1- x_{A_{\abs{A}}} +x_{A_1}).\label{const_cycle_boundary}
\end{align}
\end{corollary}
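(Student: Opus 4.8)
The plan is to follow the proof of Theorem~\ref{prop:linprog} almost verbatim, the only genuine difference being the shape of the per-set cost $\tilde C_0$ and hence of the boundary constraints. First I would discard the constant contribution $\Pr(E_\emptyset)\tilde C_0(x_\emptyset)$ — using the same convention $\tilde C_0(x_\emptyset)=1$ as on the line, which is irrelevant to the minimizer since locations do not matter when all sensors fail — so that Problem~\ref{prob:circle} becomes $\min_{0\le x_1\le\cdots\le x_n\le 1}\sum_{A\neq\emptyset}\Pr(E_A)\tilde C_0(x_A)$.

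Second, since every coefficient $\Pr(E_A)=p^{n-|A|}(1-p)^{|A|}$ is nonnegative, I would pass to the epigraph form: introduce one scalar $w_A$ per nonempty $A$ and minimize $\sum_{A\neq\emptyset}\Pr(E_A)w_A$ subject to the ordering of $x$ and to $w_A\ge \tilde C_0(x_A)$ for all $A\neq\emptyset$. At an optimum each $w_A$ is driven down to $\tilde C_0(x_A)$, so the two problems share the same optimal $x$; this is exactly the reasoning already spelled out for Theorem~\ref{prop:linprog}.

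Third, I would rewrite $\tilde C_0(x_A)$ explicitly. Because $x$ is sorted, its restriction $x_A$ to the active indices is still sorted, $x_{A_1}\le\cdots\le x_{A_{|A|}}$, and these $|A|$ points split the circle of circumference $1$ into $|A|$ arcs: the $|A|-1$ interior arcs of lengths $x_{A_{k+1}}-x_{A_k}$ and the wrap-around arc of length $1-x_{A_{|A|}}+x_{A_1}$. The point of the circle farthest from the active set is the midpoint of the longest arc, hence $\tilde C_0(x_A)=\max\{\tfrac12(1-x_{A_{|A|}}+x_{A_1}),\ \max_{k=1,\dots,|A|-1}\tfrac12(x_{A_{k+1}}-x_{A_k})\}$, the natural analogue of~\eqref{eq:sorted_C0_A} and of~\eqref{eq:sorted_C0_circle} (when $|A|=1$ the interior maximum is empty and the formula correctly returns $\tfrac12$). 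The single inequality $w_A\ge\tilde C_0(x_A)$ then splits into the finitely many linear inequalities $w_A\ge\tfrac12(x_{A_{k+1}}-x_{A_k})$ and $w_A\ge\tfrac12(1-x_{A_{|A|}}+x_{A_1})$, i.e.~\eqref{const_cycle_boundary} together with the preceding line, which finishes the equivalence.

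The argument has essentially no obstacle; the only point needing a word of care is the geometric claim that on the circle the worst-case $s$ sits at an arc midpoint and that, for sorted coordinates, the wrap-around arc has length $1-x_{A_{|A|}}+x_{A_1}$. Once this is checked — and once the empty-set convention is fixed so that the $A=\emptyset$ term is a true constant — the proof of Theorem~\ref{prop:linprog} transfers line for line, with~\eqref{constr:def_cost_boundary_in_thm_linprog} replaced by~\eqref{const_cycle_boundary}.
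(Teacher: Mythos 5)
Your proposal is correct and follows exactly the route the paper intends: the paper itself only remarks that the corollary "can be proved exactly in the same way" as Theorem~\ref{prop:linprog} with the boundary constraint replaced by~\eqref{const_cycle_boundary}, and your three steps (dropping the $\Pr(E_\emptyset)$ constant, passing to the epigraph form using $\Pr(E_A)\ge 0$, and splitting $\tilde C_0(x_A)$ into the maxima over arc half-lengths, consistent with~\eqref{eq:tilde_CO_extended}) are precisely that argument. Nothing is missing.
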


We now show that the equispaced solution $\xeq$ is the optimal sensor placement for the circle. We will then
relate it to the original problem on the line.

\begin{proposition}[Optimal solution -- Circle]\label{prop:equidist_optimal_cycle}
The equispaced sensor placement $\xeq$ is the only optimal sensor
placement (up to translation) on the circle for Problem~\ref{prob:circle}.
\end{proposition}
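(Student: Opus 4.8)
The plan is to exploit the symmetry of the circle problem under rotation and the convexity of the cost $\tilde C$, reducing optimality to a symmetrization argument. First I would observe that $\tilde C_0(x_A)$ as given in~\eqref{eq:sorted_C0_circle} is a convex function of the ordered vector $x$ (it is a maximum of affine functions), and hence $\tilde C(x) = \sum_A \Pr(E_A)\tilde C_0(x_A)$ is convex as a nonnegative combination of convex functions. The key structural fact is rotational invariance: for the circle problem, the probabilities $\Pr(E_A)$ depend only on $|A|$, and the gaps between consecutive active sensors (cyclically) are unchanged if we rotate the whole configuration or relabel the sensors cyclically. More precisely, if $\sigma$ denotes the cyclic shift of indices $i\mapsto i+1 \pmod n$, then $\tilde C(x) = \tilde C(\sigma\cdot x)$ where $\sigma\cdot x$ is the configuration obtained by cyclically permuting the sensor labels; this is because the multiset of cyclic gaps, and the distribution of $A$, are both invariant.

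Next, I would use a symmetrization/averaging step. Given any optimal configuration $x$ with cyclic gaps $g_1,\dots,g_n$ (summing to $1$), consider the $n$ configurations obtained from $x$ by all cyclic relabelings, and form their ``average'' in an appropriate sense — the configuration whose gaps are all equal to $1/n$, i.e.\ the equispaced placement $\xeq$ (up to rotation). By convexity of $\tilde C$ together with its rotational invariance (Jensen's inequality applied to the rotations), $\tilde C(\xeq) \le \tilde C(x)$, so $\xeq$ is optimal. The more delicate part is uniqueness: I would argue that $\tilde C$ is in fact \emph{strictly} convex along the relevant directions — or rather, that any configuration achieving equality in the Jensen step must have all cyclic gaps equal. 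The cleanest route is probably to identify one particular subset $A$ for which $\tilde C_0(x_A)$ is strictly convex in the gaps, or to look at the full-coverage term and a few small-$A$ terms: if the gaps are not all equal, there is a strict gain from equalizing them in at least one summand while not losing in the others, because $\Pr(E_{[n]}) = (1-p)^n > 0$ and the term $\tilde C_0(x) = \frac12\max_i\{\text{cyclic gaps}\}$ strictly decreases under gap-equalization unless all gaps already coincide. Combining, any optimizer must be equispaced up to rotation.

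The main obstacle I anticipate is making the uniqueness argument airtight: the max-type cost $\tilde C_0$ is convex but only piecewise linear, so it is not strictly convex everywhere, and one must be careful that equalizing gaps in the ``worst'' summand does not increase some other summand. The right way to handle this is a local perturbation argument: starting from a non-equispaced configuration, take the largest cyclic gap $g_{\max} > 1/n$ and the smallest $g_{\min} < 1/n$, and transfer a small amount of length from the large gap to the small one. For each $A$, the term $\tilde C_0(x_A) = \frac12\max\{\text{gaps of }x_A\}$ either stays the same or decreases (it can only increase a gap of $x_A$ that lies strictly between, but a careful choice of which sensor to move — moving a single sensor adjacent to the largest gap — ensures only two consecutive gaps of the full configuration change, one shrinking and one growing, and the growing one was strictly below the current per-$A$ maximum for the full-coverage set); hence $\tilde C(x)$ does not increase, and it strictly decreases in the $A=[n]$ term, a contradiction with optimality. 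This pins down $\xeq$ as the unique optimizer modulo rotation, which also explains the parenthetical ``up to translation'' in the statement.
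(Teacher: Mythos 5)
Your main route is essentially the paper's own argument: the paper also averages the $n$ cyclic rotations/relabelings of a candidate placement (in the guise of averaging feasible points $(x^k,w^k)$ of the linear program, which is exactly the convexity of $\tilde C_0$ as a max of affine functions), observes via the telescoping gap computation that the average is $\xeq$, and obtains uniqueness from the single term $A=[n]$, whose cost is $\tfrac12\max_i g_i>\tfrac1{2n}$ for any non-equispaced $x$ and which carries positive weight $\Pr(E_{[n]})=(1-p)^n>0$. That part of your plan is correct and complete as it stands. Two remarks. First, the worry that motivates your fallback is unfounded: Jensen is applied summand-by-summand against the \emph{rotation-average}, i.e.\ $\tilde C_0(\xeq_A)\le \frac1n\sum_k \tilde C_0((x^k)_A)$ for each $A$, and the right-hand sides resum to $\tilde C(x)$ by permutation-invariance of $\Pr(E_A)$; you never need the (false) statement that $\tilde C_0(\xeq_A)\le \tilde C_0(x_A)$ for each individual $A$. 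Second, the local perturbation argument you end up advocating does not work as stated: moving a sensor $i$ adjacent to the largest gap shrinks that gap but grows the gap on the other side of $i$, and for a subset $A$ containing $i$ whose preceding active sensor is far away, that growing gap can be the maximum of $x_A$, so $\tilde C_0(x_A)$ strictly increases for such $A$ (and when several gaps tie for the maximum, even the $A=[n]$ term need not strictly decrease). Drop the perturbation sketch and keep the symmetrization argument.
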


\begin{proof}
  The linear program nature of the problem allows us to combine different sensor
  placements. Given $(x,w)$ and $(x',w')$ we may form
  $\left(\frac{x+x'}{2},\frac{w+w'}{2}\right)$. This is a valid point
  of the polytope of constraints, and the cost is between the cost of the two
  initial placements.
  On the other hand, using the symmetry of the circle it follows that
  the rotation of $x$ (formally a translation modulo 1) does not change the associated cost, even though $w$ may need to be changed appropriately. Without loss of generality, we assume thus $x_1 = 0$.

  Let us fix an initial $(x,w)$ and define the rotated versions
  $x^1,x^2,\ldots,x^n$ such that $x_k$ becomes $(x^k)_1=0$. For every
  $x^k$ we have the corresponding best $w^k$ which all give the same cost.
  We want a closer look on their average
  $$(x^*,w^*) = \left(\frac{1}{n}\sum_{k=1}^n x^k,
    \frac{1}{n}\sum_{k=1}^n w^k\right).$$
  Using our previous observations, this is a valid sensor placement
  and $w^*$ has the same cost as any $w^k$.
  But what is this $x^*$? Let us check the distance of two
  consecutive sensors (out of bound indices and distances have to be interpreted appropriately):
  $$x^*_{i+1}-x^*_i = \frac{1}{n}\sum_{k=1}^n (x^k_{i+1}-x^k_i) = \frac{1}{n}\sum_{k=1}^n (x_{k+i}-x_{k+i-1}) = \frac{1}{n}.$$
  Therefore $x^* = \xeq$. This already shows that the equispaced placement 
  is optimal. We now show that it is the only optimal solution.\\ For the sensor
  placement $x^*$ the accompanying $w^*$ is not necessarily the best possible. We
  claim that whenever $x^1$ is not equispaced, there is a
  $\tilde{w}^*$ such that $(x^*,\tilde{w}^*)$ is valid and strictly
  cheaper than $(x,w)$.
  When $x$ is not equispaced, it means that there are two
  consecutive sensors which are more than $1/n$ apart. In other words,
  $w_{[n]}>1/(2n)$. By the construction above, we get
  $w^*_{[n]}=w_{[n]}>1/(2n)$. On the other hand, we know
    that we can decrease $w^*_{[n]}$ to $1/2n$ for the equispaced
    placement without violating any constraints. Define 
  \begin{align*}
    \tilde{w}^*_{[n]} &= 1/(2n),\\
    \tilde{w}^*_{A} &= w^*_{A} \quad \text{otherwise.} 
  \end{align*}
  This way $(x^*,\tilde{w}^*)$ is a valid point of
  the polytope.
  The costs of the different settings compare as follows:
  \begin{align*}
  \tilde{C}(x) &= \Pr(E_\emptyset) + \sum_{A\neq\emptyset}\Pr(E_A)w_A = \Pr(E_\emptyset) + \sum_{A\neq\emptyset}\Pr(E_A)w^*_A \\
  &\ge \Pr(E_\emptyset) + \sum_{A\neq\emptyset}\Pr(E_A)\tilde{w}^*_A \ge
  \tilde{C}(x^*).
      \end{align*}
  This becomes a strict inequality whenever $\Pr([n])>0$. Consequently
  $\Pr([n])>0$ is a sufficient condition for $x^*=\xeq$ to be the
  strong optimum. This condition obviously holds for independent
  failures which concludes our proof.
\end{proof}

\begin{rem}\label{rem:xeq-optimal}
The same proof shows that $\xeq$ is an optimal sensor placement for
any variation of Problem~\ref{prob:circle} where $\Pr(E_A)$ is independent of the positions of the sensors and invariant under permutation.
Moreover, if there is a nonzero probability that all sensors are active, it is the only optimal placement, up to translations.
\end{rem}

Next, we show that the optimal cost of our initial Problem~\ref{prob:mainprob} lies between the cost $\tilde C(\xeq)$ of the (optimal) equispaced solution $\xeq$ for Problem~\ref{prob:circle} on the circle and the cost $C(\xeq)$ of the same distribution for Problem~\ref{prob:mainprob}. For this purpose, we need the following lemma providing a bound on the difference of cost for each set $A$ of active sensors, which will also prove useful later. 

\begin{lemma}\label{lem:compare_circle_line_one_set}
Let $A$ be a non-empty set of active sensors and $x_A$ their positions. There holds
$
\tilde C_0(x_A) \leq C_0(x_A),
$
Moreover, if $\tilde C_0(x_A) < C_0(x_A)$, then $C_0(x_A) = \max \{ x_{A_1}, 1-x_{A_{\abs{A}}}\}$.
\end{lemma}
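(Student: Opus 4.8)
The claim compares two cost functionals for a fixed nonempty active set $A$: the line cost $C_0(x_A)$ from~\eqref{eq:sorted_C0_A} and the circle cost $\tilde C_0(x_A)$ from~\eqref{eq:sorted_C0_circle}. The plan is to compare the two maxima term by term. Writing $y_1 \le \dots \le y_m$ for the sorted positions $x_{A_1} \le \dots \le x_{A_{|A|}}$ (so $m = |A|$), we have
\[
C_0(x_A) = \max\Bigl\{ y_1,\ 1-y_m,\ \max_{k=1}^{m-1} \tfrac12 (y_{k+1}-y_k) \Bigr\}, \qquad
\tilde C_0(x_A) = \max\Bigl\{ \tfrac12(1-y_m+y_1),\ \max_{k=1}^{m-1} \tfrac12(y_{k+1}-y_k) \Bigr\}.
\]
The ``interior gap'' terms $\max_{k} \tfrac12(y_{k+1}-y_k)$ are identical in both. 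So the whole comparison reduces to understanding the two boundary contributions: the line contributes $\max\{y_1,\ 1-y_m\}$ while the circle contributes the single term $\tfrac12(1 - y_m + y_1)$, which is the half-length of the one arc of the circle not containing any sensor (the arc from $y_m$ around through $1\equiv 0$ to $y_1$).

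First I would prove $\tilde C_0(x_A)\le C_0(x_A)$. Since the interior terms match, it suffices to show the circle boundary term is dominated by the line cost, i.e. $\tfrac12(1-y_m+y_1) \le \max\{y_1, 1-y_m\}$. This is immediate: the left side is the average of $y_1$ and $1-y_m$ (both of which are $\ge 0$ because $y_1 \ge 0$ and $y_m \le 1$), and an average of two nonnegative numbers never exceeds their maximum. Hence $\tilde C_0(x_A) \le C_0(x_A)$.

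For the second assertion, suppose $\tilde C_0(x_A) < C_0(x_A)$, i.e. the inequality is strict. If the line cost were attained by the interior term, say $C_0(x_A) = \tfrac12(y_{k+1}-y_k)$ for some $k$, then $\tilde C_0(x_A)$ would be at least that same interior term, contradicting strictness. Therefore the line cost must be attained (only) by a boundary term: $C_0(x_A) = \max\{y_1,\ 1-y_m\} = \max\{x_{A_1},\ 1-x_{A_{|A|}}\}$, and strictly exceeds every interior gap. This is exactly the stated conclusion.

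I do not anticipate a genuine obstacle here; the only point requiring a moment's care is the nonnegativity of $y_1$ and $1-y_m$ (guaranteeing that the circle boundary term is a bona fide average lying below the max), which follows from the standing assumption $x\in[0,1]^n$, and the bookkeeping that the interior-gap terms really are the same expression in~\eqref{eq:sorted_C0_A} and~\eqref{eq:sorted_C0_circle} once we relabel $A$-indexed entries as $y_1,\dots,y_m$. The degenerate case $|A|=1$ should be checked separately: then there are no interior terms, $C_0 = \max\{y_1, 1-y_1\}$ and $\tilde C_0 = \tfrac12$, and both parts of the lemma hold trivially (with the conclusion of the second part reading $C_0(x_A) = \max\{x_{A_1}, 1-x_{A_1}\}$, which is true by definition whenever the strict inequality occurs).
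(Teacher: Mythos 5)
Your proof is correct and follows essentially the same route as the paper's: both arguments rest on the observation that the circle's boundary term $\tfrac12(1-x_{A_{|A|}}+x_{A_1})$ is the average of the two line boundary terms $x_{A_1}$ and $1-x_{A_{|A|}}$ (hence dominated by their maximum), while the interior gap terms coincide, so strict inequality forces the line cost to be attained at a boundary term. The paper phrases this as ``adding the average to the max does not change it,'' but the content is identical.
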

\begin{proof}
Consider~\eqref{eq:sorted_C0_A}.
Adding the average of the first two terms in the set on which the maximum is taken does not affect the value of the maximum. We have therefore 
\begin{equation}\label{eq:C_0_extended}
C_0(x_A)=  \max \left\{ x_{A_1}, (1-x_{A_{\abs{A}}}), \frac{1}{2}(1 + x_{A_1} - x_{A_{\abs{A}}}),  \max_{k=1, \dots, \abs{A} -1}  \frac{1}{2}(x_{A_{k+1}}-x_{A_k})\right\}.\end{equation}
Observe that every quantity appearing in the definition 
\begin{equation}\label{eq:tilde_CO_extended}
\tilde C_0(x_A)=  \max \left\{ \frac{1}{2}(1 + x_{A_1} - x_{A_{\abs{A}}}),  \max_{k=1, \dots, \abs{A} -1}\frac{1}{2} (x_{A_{k+1}}-x_{A_k})\right\}
\end{equation}
also appears in~\eqref{eq:C_0_extended}. Therefore, we have $\tilde C_0(x) \leq C_0(x_A)$. Moreover, in case this inequality is strict, $C_0(x_A)$ must be equal to one of the elements that appear in~\eqref{eq:C_0_extended} but not 
in~\eqref{eq:tilde_CO_extended}, that is, either $x_{A_1}$ or $1-x_{A_{\abs{A}}}$.
\end{proof}

\begin{lemma}\label{lem:3bounds}
Let $x^*$ be an optimal solution to Problem~\ref{prob:mainprob} for given $n$ and $p$. There holds 
$$\tilde C (\xeq) \leq \tilde C (x^*) \leq C(x^*) \leq C(\xeq).$$
\end{lemma}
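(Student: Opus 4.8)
The plan is to establish the chain one inequality at a time, each relying on a single fact already available. The middle inequality $\tilde C(x^*) \le C(x^*)$ is a pointwise comparison of the two cost functions evaluated at the \emph{same} placement; the two outer inequalities are optimality statements, namely that $\xeq$ minimizes $\tilde C$ (Problem~\ref{prob:circle}) and that $x^*$ minimizes $C$ (Problem~\ref{prob:mainprob}).

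For the middle inequality I would write $\tilde C(x^*) = \sum_{A \subseteq [n]} \Pr(E_A)\,\tilde C_0(x^*_A)$ and $C(x^*) = \sum_{A \subseteq [n]} \Pr(E_A)\,C_0(x^*_A)$, then compare the two sums term by term. Lemma~\ref{lem:compare_circle_line_one_set} gives $\tilde C_0(x^*_A) \le C_0(x^*_A)$ for every non-empty $A$, and for $A = \emptyset$ the two conventional values coincide (both equal $1$). Since $\Pr(E_A) \ge 0$ for all $A$, summing yields $\tilde C(x^*) \le C(x^*)$. Note that the same argument in fact shows $\tilde C \le C$ pointwise on $[0,1]^n$, although only the value at $x^*$ is needed here.

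For the right inequality, $x^*$ is by definition an optimal solution of Problem~\ref{prob:mainprob}, so $C(x^*) \le C(x)$ for every $x \in [0,1]^n$, and in particular $C(x^*) \le C(\xeq)$. For the left inequality, Proposition~\ref{prop:equidist_optimal_cycle} states that $\xeq$ is an optimal placement for Problem~\ref{prob:circle}, hence $\tilde C(\xeq) \le \tilde C(x)$ for every $x$, and in particular $\tilde C(\xeq) \le \tilde C(x^*)$. Concatenating the three inequalities gives $\tilde C(\xeq) \le \tilde C(x^*) \le C(x^*) \le C(\xeq)$.

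There is essentially no obstacle: the lemma is a bookkeeping step that splices together Lemma~\ref{lem:compare_circle_line_one_set} and Proposition~\ref{prop:equidist_optimal_cycle}. The only point deserving a word of care is the $A = \emptyset$ summand, where one must check that the circle and line conventions are compatible (it suffices that $\tilde C_0(x_\emptyset) \le C_0(x_\emptyset) = 1$), so that the termwise comparison is not spoiled by that single term; everything else is immediate.
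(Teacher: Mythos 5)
Your proof is correct and follows exactly the same route as the paper's: the outer inequalities come from the optimality of $x^*$ for Problem~\ref{prob:mainprob} and of $\xeq$ for Problem~\ref{prob:circle} (Proposition~\ref{prop:equidist_optimal_cycle}), and the middle one from the termwise comparison in Lemma~\ref{lem:compare_circle_line_one_set}. Your extra remark on the $A=\emptyset$ term is a harmless refinement; the paper sidesteps it by writing the difference $C(x)-\tilde C(x)$ as a sum over non-empty $A$ only.
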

\begin{proof}
The inequality $C(x^*)\leq C(\xeq)$ follows from the optimality of $x^*$ for Problem~\ref{prob:mainprob}. Similarly, $\tilde C(\xeq) \leq \tilde C(x^*)$ follows from the optimality of $\xeq$ for Problem~\ref{prob:circle} proved in Theorem~\ref{prop:equidist_optimal_cycle}. Finally, since $C(x) - \tilde C(x) = \sum_{A\neq\emptyset}\Pr(E_A)(C_0(x_A)-\tilde C_0(x_A))$, it follows from Lemma~\ref{lem:compare_circle_line_one_set} that $\tilde C(x) \leq C(x)$ for every $x$, and in particular that $C (x^*) \leq C(x^*)$.
\end{proof}

Thanks to Lemma~\ref{lem:3bounds}, now we just have to evaluate the difference between the cost of the equispaced solution $\xeq$ in Problems~\ref{prob:mainprob} and~\ref{prob:circle}. 

\begin{lemma}\label{lem:bound_eq_line-eq_circle}
For any $n\in \natural$, $p\in (0,1)$, there holds
$$
C(\xeq) \leq \tilde C(\xeq)  +\frac{2}{n}\frac{p}{1-p}.
$$
\end{lemma}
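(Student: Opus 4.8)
The plan is to bound the difference $C(\xeq) - \tilde C(\xeq)$ by comparing the per-configuration costs $C_0(\xeq_A)$ and $\tilde C_0(\xeq_A)$ and exploiting the explicit structure of the equispaced placement. By Lemma~\ref{lem:compare_circle_line_one_set}, for every nonempty $A$ we have $\tilde C_0(\xeq_A) \le C_0(\xeq_A)$, and whenever the inequality is strict, $C_0(\xeq_A) = \max\{ \xeq_{A_1}, 1 - \xeq_{A_{|A|}}\}$; in the notation of \S\ref{sect-proof-paolo}, this says $C_0(\xeq_A) = \max\{ \frac{2L_{\mathrm i}+1}{2n}, \frac{2L_{\mathrm f}+1}{2n}\}$. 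So the contribution to $C(\xeq) - \tilde C(\xeq)$ coming from $A$ is nonzero only when one of the two border runs of failures is the \emph{strict} maximum, and in that case the difference $C_0(\xeq_A) - \tilde C_0(\xeq_A)$ is at most $\max\{\frac{2L_{\mathrm i}+1}{2n}, \frac{2L_{\mathrm f}+1}{2n}\}$ itself (since $\tilde C_0 \ge 0$), hence at most $\frac{2L_{\mathrm i}+1}{2n} + \frac{2L_{\mathrm f}+1}{2n}$ minus the part that is anyway present. A cleaner route: write $C_0(\xeq_A) - \tilde C_0(\xeq_A) \le \big(\frac{2L_{\mathrm i}+1}{2n} - \frac{1}{2n}\big)^+ + \big(\frac{2L_{\mathrm f}+1}{2n} - \frac{1}{2n}\big)^+ = \frac{L_{\mathrm i}}{n} + \frac{L_{\mathrm f}}{n}$, using that $\tilde C_0(\xeq_A) \ge \frac{1}{2n}$ always (the largest circular gap is at least the average gap $1/n$) and that $C_0(\xeq_A)$ exceeds this only through the two boundary terms, each of which overshoots $\frac{1}{2n}$ by exactly $\frac{L_{\mathrm i}}{n}$ resp. $\frac{L_{\mathrm f}}{n}$.

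With this per-$A$ bound in hand, I would sum against $\Pr(E_A)$ and take expectations:
\[
C(\xeq) - \tilde C(\xeq) = \sum_{A \neq \emptyset} \Pr(E_A)\big(C_0(\xeq_A) - \tilde C_0(\xeq_A)\big) \le \frac{1}{n}\big(\E L_{\mathrm i} + \E L_{\mathrm f}\big),
\]
where I must be slightly careful about the empty-set term: for $A = \emptyset$ both $C_0$ and $\tilde C_0$ are set to $1$, so it contributes zero to the difference and can be dropped. Then, exactly as recalled in \S\ref{sect-proof-paolo}, $L_{\mathrm i}$ and $L_{\mathrm f}$ are truncated geometric random variables dominated by a geometric r.v. $X$ with $\Pr(X = k) = p^k(1-p)$, so $\E L_{\mathrm i} = \E L_{\mathrm f} \le \E X = \frac{p}{1-p}$. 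Substituting gives $C(\xeq) - \tilde C(\xeq) \le \frac{2}{n}\frac{p}{1-p}$, which is the claim.

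The main obstacle — really the only non-routine point — is getting the per-configuration bound $C_0(\xeq_A) - \tilde C_0(\xeq_A) \le \frac{L_{\mathrm i} + L_{\mathrm f}}{n}$ tight enough, i.e. making sure the $\frac12 (1 + \xeq_{A_1} - \xeq_{A_{|A|}})$ ``wrap-around'' term in $\tilde C_0$ is correctly accounted for so that we only pay for the boundary \emph{overshoot} and not twice. Concretely, one uses that for the equispaced placement $\tilde C_0(\xeq_A) = \max\{\frac{R+1}{2n}, \frac{L_{\mathrm i} + L_{\mathrm f} + 1}{2n}\}$ (the wrap-around gap merges the two boundary runs into one run of length $L_{\mathrm i} + L_{\mathrm f}$ on the circle), while $C_0(\xeq_A) = \max\{\frac{R+1}{2n}, \frac{2L_{\mathrm i}+1}{2n}, \frac{2L_{\mathrm f}+1}{2n}\}$; a short case analysis on which term attains each maximum then yields $C_0(\xeq_A) - \tilde C_0(\xeq_A) \le \frac{\max\{L_{\mathrm i}, L_{\mathrm f}\}}{n} \le \frac{L_{\mathrm i} + L_{\mathrm f}}{n}$, which is in fact a touch stronger than needed. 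Everything after that is the elementary expectation computation above.
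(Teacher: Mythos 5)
Your proposal is correct and follows essentially the same route as the paper's proof: it uses Lemma~\ref{lem:compare_circle_line_one_set} to localize the discrepancy to the two boundary runs, the lower bound $\tilde C_0(\xeq_A)\ge \frac{1}{2n}$ for the equispaced placement, the resulting per-configuration bound $C_0(\xeq_A)-\tilde C_0(\xeq_A)\le \frac{1}{n}\bigl(L_{\mathrm i}+L_{\mathrm f}\bigr)$, and the truncated-geometric expectation bound $\E L_{\mathrm i}=\E L_{\mathrm f}\le \frac{p}{1-p}$. The only differences are cosmetic (your sharper case analysis giving $\frac{\max\{L_{\mathrm i},L_{\mathrm f}\}}{n}$ is not needed), so no further comment is required.
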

\begin{proof}
We first consider a (non-empty) set of active sensors $A$ and find a bound on the difference of cost $C_0(\xeq_A) - \tilde C_0(\xeq_A)$. Observe first that $\xeq_i = \frac{1}{2n} (2i - 1)$, and therefore that $\tilde C_0(\xeq_A) \geq \frac{1}{2n}$ in all cases. Suppose now that $C_0(\xeq_A)$ and $\tilde C_0(\xeq_A)$ are different. It follows in that case from Lemma~\ref{lem:compare_circle_line_one_set} that $C(\xeq_A) >  \tilde C(\xeq_A)$, and that 
$$C_0(\xeq_A) = \max (x_{A_1},1 - x_{A_{\abs{A}}})=  \frac{1}{2n }\max (2A_1 -1 ,1 + 2(n-{A_{\abs{A}}})).$$
Whenever $C_0(\xeq_A) \neq \tilde C_0(\xeq_A)$, we have thus \begin{equation}\label{eq:bound_one_event}
C_0(\xeq_A) - \tilde C_0(\xeq_A) \leq \frac{1}{2n }\max (2A_1 -2 , 2(n-{A_{\abs{A}}}) \leq   \frac{1}{n }(A_1 -1 + n - A_{\abs{A}}).
\end{equation}
When $C_0(\xeq_A) = \tilde C_0(\xeq_A)$, the inequality also holds since the right-hand side of~\eqref{eq:bound_one_event} is non-negative.

We now sum the inequality~\eqref{eq:bound_one_event} over all events and use the symmetry of our problem to obtain 
\begin{align}
C(\xeq) - \tilde C(\xeq) &\leq \frac{1}{n}\sum_{A\neq\emptyset}\Pr(E_A) (A_1 -1) + \frac{1}{n } \sum_{A\neq\emptyset}\Pr(E_A) (n - A_{\abs{A}})\nonumber\\& = \frac{2}{n}\sum_{A\neq\emptyset}\Pr(E_A) (A_1 -1)\nonumber \\
&= \frac{2}{n} \sum_{k=1}^n \Pr(A_1 =k)  (k-1), \label{eq:bound_difference_togeometric} 
\end{align}
where the event $A_1= k$ implicitly implies that $A$ is non-empty. 
Observe  that the probability for the $k^{th}$ sensor to be the first active one is $p^{k-1}(1-p)$. Therefore, the expression $\sum_{k=1}^n \Pr(A_1 =k)  (k-1)$ is the expected value of a truncated geometric random variable (i.e. a geometric random variable whose value is set to 0 if it exceeds $n$), and is bounded by $\frac{p}{1-p}$. Reintroducing this into~\eqref{eq:bound_difference_togeometric} leads to the desired result $C(\xeq) - \tilde C(\xeq) \leq \frac{2}{n} \frac{p}{1-p}$.
\end{proof}

The inequality~\eqref{eq:almost-optimal-bound} in Theorem~\ref{th:equispace-logn} 
follows then from the combination of Lemmas~\ref{lem:3bounds} and~\ref{lem:bound_eq_line-eq_circle}.

\section{Extreme values of $p$}\label{sect:extreme_p}

In this section, we study the optimal placement when $p$ takes on extreme values, either close to 0 or to 1. Our first result gives the optimal placements under such conditions.

\newcommand{\xsingle}{\supscr{x}{sgl}}
\begin{proposition}[Small and large $p$]\label{prop:neighborhoods}
If $p$ is in a neighborhood of 0, then the equispaced placement $\xeq$ is optimal. Similarly, if $p$ is in a neighborhood of 1, then the optimal placement is $\xsingle$, where $\xsingle_i=\frac12$ for all $i\in[n]$.
  \label{prp:pnearextreme}
\end{proposition}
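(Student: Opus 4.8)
The plan is to use the linear-programming reformulation of Theorem~\ref{prop:linprog} together with the observation, already emphasised above, that the feasible region $P\subseteq\real^n\times\real^{2^n-1}$ in the variables $(x,w)$ does not depend on $p$: the parameter enters only through the objective $\sum_{A\neq\emptyset}\Pr(E_A)\,w_A$, each coefficient $\Pr(E_A)=p^{n-|A|}(1-p)^{|A|}$ being a polynomial in $p$ (equivalently in $q:=1-p$). Since $P$ has finitely many vertices and, for every $p\in(0,1)$, the minimum is finite and attained at a vertex, it suffices to understand which vertices are optimal as $p\to0^+$ and as $p\to1^-$. The key general fact I would establish is a parametric-LP statement: if the objective reads $\sum_{j\ge0}t^j\,c_j^{\!\top}y$, then there is $\bar t>0$ such that for all $t\in(0,\bar t)$ the optimal vertices are precisely those that lexicographically minimise the coefficient vector $\bigl(c_0^{\!\top}y,\,c_1^{\!\top}y,\,\dots\bigr)$; this is because, for two vertices $v\neq v'$, the quantity $\sum_j t^j c_j^{\!\top}(v-v')$ is a polynomial in $t$ whose sign for small $t>0$ equals the sign of its lowest-order nonzero coefficient, i.e.\ the lexicographic comparison, and one takes the minimum of the finitely many resulting thresholds. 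Consequently, for small $t>0$, every optimal solution of the LP lies in the face of $P$ obtained by first minimising $c_0^{\!\top}y$ over $P$, then $c_1^{\!\top}y$ over the resulting face, and so on.

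For $p$ near $0$ I would take $t=p$. The degree-$0$ coefficient of $\sum_{A\neq\emptyset}\Pr(E_A)w_A$ is $w_{[n]}$, since $p^{n-|A|}(1-p)^{|A|}$ has a nonzero constant term only for $A=[n]$. So for small $p$ every optimal solution must minimise $w_{[n]}$ over $P$. But minimising $w_{[n]}$ forces, via constraints~\eqref{constr:def_cost_normal_in_thm_linprog}--\eqref{constr:def_cost_boundary_in_thm_linprog} for $A=[n]$, that $x_1\le w_{[n]}$, $1-x_n\le w_{[n]}$ and $x_{k+1}-x_k\le 2w_{[n]}$ for all $k$; summing the telescoping identity $1=x_1+(1-x_n)+\sum_{k=1}^{n-1}(x_{k+1}-x_k)$ gives $w_{[n]}\ge\frac1{2n}$, with equality only when all of these inequalities are tight, i.e.\ exactly when $x=\xeq$. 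Hence the optimal face has $x$-block identically $\xeq$, and the lemma produces $\bar p>0$ such that $\xeq$ solves Problem~\ref{prob:mainprob} for every $p\in(0,\bar p)$.

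For $p$ near $1$ I would set $t=q=1-p$, so $\Pr(E_A)=(1-q)^{n-|A|}q^{|A|}$. The constant $\Pr(E_\emptyset)$ does not appear in the LP objective, the degree-$0$ coefficient of $\sum_{A\neq\emptyset}\Pr(E_A)w_A$ is therefore $0$, and its coefficient of $q^1$ is $\sum_{i\in[n]}w_{\{i\}}$, because only the singletons $A=\{i\}$ contribute at first order in $q$. Thus for small $q$ every optimal solution minimises $\sum_i w_{\{i\}}$ over $P$. The only constraints involving a singleton variable $w_{\{i\}}$ are $w_{\{i\}}\ge x_i$ and $w_{\{i\}}\ge 1-x_i$, so for fixed $x$ the minimum of $\sum_i w_{\{i\}}$ is $\sum_i\max(x_i,1-x_i)\ge n/2$, with equality over $P$ precisely at $x=\xsingle$. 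Hence the optimal face has $x$-block identically $\xsingle$, and the lemma gives $\bar q>0$ such that $\xsingle$ solves Problem~\ref{prob:mainprob} for every $p\in(1-\bar q,1)$.

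The one genuine difficulty is the parametric-LP lemma itself in the presence of the extreme degeneracy at $p=0$ and $p=1$, where essentially all objective coefficients vanish or become infinitesimally small: one must argue that an arbitrarily small polynomial perturbation still pins the $x$-block down \emph{exactly}, not merely approximately. That is precisely what the finiteness of the vertex set buys through the lowest-order-coefficient sign argument; once it is in place, everything else reduces to the two elementary minimisations of $w_{[n]}$ and of $\sum_i w_{\{i\}}$ over $P$ carried out above. An alternative, LP-free route is also available: for small $p$ write $C(x)=\Pr(E_{[n]})\,C_0(x)+\sum_{A\neq[n]}\Pr(E_A)\,C_0(x_A)$, use that the piecewise-linear convex $C_0$ satisfies $C_0(x)\ge C_0(\xeq)+c\,\|x-\xeq\|$ for some $c>0$ since $\xeq$ is its unique minimiser, and observe that the remaining sum is Lipschitz with constant $O(np)$; an analogous expansion around the singleton terms settles the case $p$ near $1$.
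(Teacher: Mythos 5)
Your proposal is correct and takes essentially the same route as the paper: the $p$-independent constraint polytope, the finiteness of its vertices, and a low-order expansion of the objective in $p$ (respectively $1-p$), whose leading coefficient $w_{[n]}$ (respectively $\sum_i w_{\{i\}}$) is uniquely minimized at $\xeq$ (respectively $\xsingle$). The paper packages the same perturbation argument as continuity at $p=0$ (where $\xeq$ is the known unique optimum) and a first-order expansion of $C(x)$ in $\eps=1-p$, while you formalize it as a lexicographic parametric-LP lemma; the substance is identical.
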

\begin{proof}
We rely on the linear program formulation in \S~\ref{sect:linprog}. We have seen that the polytope of constraints is independent of $p$, and that the cost vector evolves continuously with $p$.
For $p=0$ we know that the unique optimal solution is $\xeq$. This means that for any other vertex $x$ of the polytope of constraints we have
$$C(\xeq) < C(x).$$
Let us denote the set of vertices of the polytope of constraints by $V$. Knowing that $V$ is finite, we get
$$C(\xeq) < \min_{x\in V\setminus \{\xeq\}}C(x).$$
The strict inequality and the continuity of the cost function with respect to $p$ imply that, for a sufficiently small perturbation of the cost vector,
 $\xeq$ will remain the optimal placement. In other words, $\xeq$ is optimal as long as $p$ is in a sufficiently small neighborhood of 0.

For large failure probability $p=1-\eps$ 
the most relevant events are those with just one active sensor,
in the sense that any $A$ with size two or more has $\Pr(E_A) = O\left(\eps^2\right)$.
Then,
$$C(x) = (1-\eps)^n + \eps (1-\eps)^{n-1} \sum_{i=1}^n\max(x_i,1-x_i)+O\left(\eps^2\right).$$
This holds for any placement $x$, so in particular for all $x \in V$. Clearly $\xsingle$ is strictly optimal
concerning the main term $\sum_{i=1}^n\max(x_i,1-x_i)$. Recalling that $V$ is finite, this implies that one can find a sufficiently small $\bar \eps$ such that,
for all $\eps \le  \bar \eps$, $C(\xsingle) < C(x)$ for all $x \in V \setminus \{\xsingle\}$, i.e., $\xsingle$ is optimal.
\end{proof}

The next two results provide estimates on the sizes of the neighborhoods in Proposition~\ref{prop:neighborhoods}, showing that their sizes asymptotically vanish as $n$ diverges. 
Their proofs, presented in Appendices \ref{ap:propequidistsmall} and \ref{ap:propequidistlarge}, rely on comparing $\xeq$ and $\xsingle$ with alternative placements, and showing that the former are not optimal when $p$ differs from respectively 0 or 1 by more than a certain value that decays in $O(1/n)$.

\begin{proposition}[Neighborhood of 0]
The neighborhood of 0 where $\xeq$ is optimal is at most $c_0/n$ long,
with some constant $c_0>0$.
\label{prop:equidistsmallp}  
\end{proposition}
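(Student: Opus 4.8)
The plan is to exhibit, for every $p$ above some $c_0/n$ and every large $n$, a placement strictly cheaper than $\xeq$; since the set of $p$ for which $\xeq$ is optimal is a neighbourhood of $0$ (Proposition~\ref{prop:neighborhoods}), this bounds its length, and the finitely many remaining small values of $n$ are absorbed by enlarging $c_0$ (for each such $n$ the optimality neighbourhood has length at most $1$, indeed strictly less for $n\ge 2$, since $\xeq$ is not optimal near $p=1$). The competitor is the equispaced placement \emph{with sensors at the two endpoints}, $\supscr{x}{bd}_i=\tfrac{i-1}{n-1}$ for $i\in[n]$. Its largest gap is $\tfrac1{n-1}$ and its distances to $0$ and $1$ vanish, so $C_0(\supscr{x}{bd})=\tfrac1{2(n-1)}$; at $p=0$ it is thus worse than $\xeq$ by exactly $\delta_n:=\tfrac1{2n(n-1)}$, and the whole point is that this $O(1/n^2)$ handicap is beaten, once $p$ reaches order $1/n$, by a much better behaviour under failures of the extreme sensors.

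\textbf{Set-by-set comparison.} For a fixed active set $A$, write $L_{\mathrm i}=A_1-1$ and $L_{\mathrm f}=n-A_{\abs{A}}$ for the initial and final runs of failures, and $M(A)=\max_k(A_{k+1}-A_k)$ for the largest gap between consecutive active indices (one plus the longest interior run). By~\eqref{eq:sorted_C0_A}, $2n\,C_0(\xeq_A)=\max\{2L_{\mathrm i}+1,\,2L_{\mathrm f}+1,\,M(A)\}$, while the same computation for $\supscr{x}{bd}$ gives $2(n-1)\,C_0(\supscr{x}{bd}_A)=\max\{2L_{\mathrm i},\,2L_{\mathrm f},\,M(A)\}$: the ``$+1$'' that $\xeq$ pays at each endpoint disappears because $\supscr{x}{bd}_1=0$ and $\supscr{x}{bd}_n=1$. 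Subtracting, $C_0(\xeq_A)-C_0(\supscr{x}{bd}_A)$ equals $-M(A)\,\delta_n$ when $M(A)>2\max(L_{\mathrm i},L_{\mathrm f})$ (an interior gap is the bottleneck; this contains the no-failure event, with $M=1$), and equals $\bigl(n-1-2\max(L_{\mathrm i},L_{\mathrm f})\bigr)\delta_n$ when $M(A)\le2\max(L_{\mathrm i},L_{\mathrm f})$ (a boundary run is the bottleneck). The first is $O(\delta_n)$ and negative; the second is positive and, when the dominant boundary run has length $1$, equals $(n-3)\delta_n\approx\tfrac1{2n}$ --- larger by a factor of order $n$.

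\textbf{Averaging and asymptotics.} Averaging against $\Pr(E_A)=p^{n-\abs{A}}(1-p)^{\abs{A}}$ and collecting terms gives, up to a negligible $O(n\delta_n p^n)$ from the empty set, $C(\xeq)-C(\supscr{x}{bd})=n\delta_n\bigl(\Pr(M(A)\le2\max(L_{\mathrm i},L_{\mathrm f}))-2\,C(\xeq)\bigr)$. Put $p=c/n$ and let $n\to\infty$. With probability $1-o(1)$ every run of failures has length $\le1$; on that event the boundary run dominates exactly when sensor $1$ or sensor $n$ fails, so $\Pr(M(A)\le2\max(L_{\mathrm i},L_{\mathrm f}))\sim2p$ and $n\Pr(\cdot)\to2c$, whereas $2n\,C(\xeq)=\E\max\{2L_{\mathrm i}+1,2L_{\mathrm f}+1,M(A)\}+O(np^n)$ tends to $2-e^{-c}$ (the maximum is $1$ when no sensor fails and, with probability $1-o(1)$, $2$ otherwise). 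Hence $C(\xeq)-C(\supscr{x}{bd})=\delta_n\bigl(2c+e^{-c}-2+o(1)\bigr)$, which is strictly positive as soon as $c$ exceeds the unique root $c^\star\approx0.77$ of $2c+e^{-c}=2$. Choosing $c_0$ just above $c^\star$ (and large enough to cover the small $n$) completes the argument.

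\textbf{Main obstacle.} The delicate point is to make the ``$o(1)$'' uniform in $n$. One must bound, inside $C(\xeq)-C(\supscr{x}{bd})$, the total weight of all atypical failure patterns: those with two adjacent failures or a longer run (where $M(A)\ge3$), those with a boundary run of length $\ge2$, and those with $\abs{A}\le1$. Each such family has probability $o(1/n)$ or $o(1)$ by union bounds and geometric-tail estimates --- e.g.\ $\Pr(R_n\ge2)\le np^2$, and $\Pr(\text{sensor }1\text{ fails and }R_n\ge2)\le np^3$ --- so contributes $o(\delta_n)$; but because several of these terms are themselves of order $\delta_n=\Theta(1/n^2)$, i.e.\ comparable to the leading terms, the accounting has to be done carefully. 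No single inequality is hard; it is the bookkeeping that requires attention.
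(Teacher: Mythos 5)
Your proposal is correct and takes essentially the same route as the paper's own proof: exhibit an alternative placement that gives up exactly $\tfrac{1}{2n(n-1)}$ in the failure-free cost but does markedly better when a boundary sensor fails (you put sensors at $0$ and $1$; the paper's $\supscr{x}{alt}$ pairs sensors near each end), compare the two costs event-by-event through the longest interior run and the boundary run lengths, average at $p=c/n$, and absorb the finitely many small $n$ into the constant. The differences are only in execution: your exact per-event identity plus a limiting argument yields the sharper threshold $c\approx 0.77$ (the paper uses one-sided bounds and any $c>4$), at the price of the uniform-in-$n$ tail bookkeeping you flag, which does go through with the union-bound and geometric-tail estimates you indicate.
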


\medskip

\begin{proposition}[Neighborhood of 1]
The neighborhood of 1 where the single cluster placement $\xsingle$ is optimal is at most $3/n$ long.
\label{prop:equidistlargep}  
\end{proposition}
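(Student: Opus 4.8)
The claim is that the neighborhood of $1$ where $\xsingle$ is optimal has length at most $3/n$; equivalently, whenever $p \le 1 - 3/n$ (i.e., $\eps = 1-p \ge 3/n$), the single-cluster placement $\xsingle$ is no longer optimal. The natural strategy is to exhibit an explicit competitor placement $y$ and show $C(y) < C(\xsingle)$ whenever $\eps \ge 3/n$. The first step is to compute $C(\xsingle)$ exactly: since all sensors sit at $1/2$, the set of active sensors is either empty (cost $1$) or nonempty, in which case $C_0 = \max_s \min_j |s - 1/2| = 1/2$. Hence $C(\xsingle) = p^n \cdot 1 + (1-p^n)\cdot \tfrac12 = \tfrac12 + \tfrac12 p^n = \tfrac12 + \tfrac12(1-\eps)^n$.

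Next I would pick a simple two-cluster competitor, say $n/2$ sensors at $1/4$ and $n/2$ at $3/4$ (or, to avoid parity issues, $\lfloor n/2\rfloor$ and $\lceil n/2\rceil$ sensors at those two points). For this $y$: if all sensors fail, cost $1$; if at least one sensor is active at exactly one of the two points, the cost is $1/2$ (one cluster covers, worst case at the empty side); if at least one sensor is active at each point, the cost drops to $1/4$. Let $q = p^{\lfloor n/2\rfloor}$ roughly be the probability a given cluster is entirely dead (more precisely $p^{\lceil n/2 \rceil}$ and $p^{\lfloor n/2\rfloor}$ for the two clusters). Then $C(y) = p^n\cdot 1 + (\text{prob exactly one cluster alive})\cdot \tfrac12 + (\text{prob both alive})\cdot \tfrac14$, which one can write as $\tfrac14 + \tfrac14 \Pr(\text{at least one cluster dead}) + \tfrac12\Pr(\text{all dead})$, or more cleanly $C(y) \le \tfrac14 + \tfrac14(q_1 + q_2) + \tfrac12 p^n$ where $q_i$ are the two cluster-death probabilities — the key point being the main term is $\approx \tfrac14$ rather than $\tfrac12$.

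The comparison $C(y) < C(\xsingle)$ then reduces, after cancelling the $\tfrac12 p^n$ terms, to showing $\tfrac14 + \tfrac14(q_1+q_2) < \tfrac12$, i.e. $q_1 + q_2 < 1$, i.e. roughly $2 p^{\lfloor n/2\rfloor} < 1$. Now $p^{\lfloor n/2\rfloor} \le (1 - 3/n)^{n/2}$, and $(1-3/n)^{n/2} \le e^{-3/2} < 1/4$ for $n$ large enough, so $q_1 + q_2 < 1/2 < 1$ comfortably. One has to be a little careful with the floor/ceiling and with small $n$ (where $3/n$ may exceed $1$, making the statement vacuous, so only $n \ge 4$ matters), but these are routine. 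I expect the main technical nuisance — not a deep obstacle — to be getting a clean bookkeeping of the three-way event probabilities for the two-cluster placement and handling the parity of $n$ so that the bound $3/n$ (rather than some larger constant over $n$) comes out; choosing the competitor and the estimate $(1-c/n)^{n} \approx e^{-c}$ generously should give room to spare, so the exact constant $3$ is not tight and easy to reach.

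\medskip

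Alternatively, and perhaps more in the spirit of the linear-program viewpoint used elsewhere in the paper, one could argue via a local perturbation: starting from $\xsingle$, split the cluster into two halves and move them apart by a small amount $t$, compute $\frac{d}{dt}C$ at $t = 0^+$, and show this derivative is negative precisely when $\eps$ exceeds a threshold of order $1/n$. This reduces to the same kind of estimate — the gain from the ``both clusters alive'' event (probability $\approx (1-\eps)^n$... no, $\approx 1 - 2p^{n/2}$) must outweigh the first-order loss on the ``one cluster alive'' events — but the global two-cluster comparison above is cleaner because it avoids derivatives and directly yields the explicit constant.
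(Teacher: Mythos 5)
There is a genuine error in your cost bookkeeping for the two-cluster competitor, and it is exactly the issue that the paper's choice of competitor is designed to avoid. If the sensors sit at $1/4$ and $3/4$ and only the cluster at $1/4$ survives, the worst-case point is $s=1$, at distance $3/4$ from the surviving cluster --- not $1/2$ as you claim (``worst case at the empty side'' is at distance $3/4$, since the empty side extends all the way to the far endpoint). With the correct values ($1/4$ if both clusters are alive, $3/4$ if exactly one is alive, $1$ if none), the expected cost is $C(y)=\frac14+\frac12(q_1+q_2)-\frac14 p^n$, and the comparison with $C(\xsingle)=\frac12+\frac12 p^n$ requires $q_1+q_2<\frac12+\frac32 p^n$, i.e.\ essentially $q_1+q_2<\frac12$ --- twice as demanding as the condition $q_1+q_2<1$ you derived. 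The argument can still be salvaged, because $2e^{-3/2}\approx 0.446<\frac12$, but the ``room to spare'' you invoke is illusory: the margin is about $10\%$, and for odd $n$ the bound $p^{\lfloor n/2\rfloor}\le e^{-3/2}$ at $p=1-3/n$ requires the second-order term in $-\log(1-3/n)\ge \frac3n+\frac9{2n^2}$ (the first-order estimate only gives $\lfloor n/2\rfloor\cdot\frac3n\ge\frac32(1-\frac1n)$, which is not enough on its own). So the written proof has a real gap; a corrected version exists but needs this more careful verification.

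The paper sidesteps this entirely by using a \emph{three}-cluster competitor: $k$ sensors at $1/4$, $n-2k$ at $1/2$, and $k$ at $3/4$. The middle cluster guarantees that losing one side cluster still yields cost $1/2$ (no worse than $\xsingle$), so the comparison reduces to balancing the probability of the favorable outcome $1/4$ (both side clusters alive) against the unfavorable outcome $3/4$ (a side cluster \emph{and} the middle cluster dead), i.e.\ to $p^k(1-p^k)>2p^n$; the freedom in choosing $k$ then makes the constant $3$ easy to reach. Your local-perturbation alternative would run into the same boundary effect (splitting the single cluster symmetrically and moving the halves apart creates, to first order, a loss of order $t$ on every ``only one half alive'' event), so the three-cluster construction, or a correctly quantified two-cluster one, really is needed.
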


The proof of Proposition~\ref{prop:equidistlargep} actually shows the slightly stronger result that $\xsingle$ is sub-optimal for any $p<1-3/n$. In other words, it does not become optimal again for smaller values of $p$.

\section{Performance of a random placement}\label{sect:random}
In this section we consider a random placement $\xrand$ of the sensors. More
precisely, the positions $\xrand_1$, $\dots$, $\xrand_n$ are i.i.d.~random
variables, uniformly distributed in the interval $[0,1]$.
Notice that (differently from $x$ in the rest of the paper), here $\xrand$ has entries which are not ordered, so that
the cost definition in~\eqref{eq:def_C0_xA} applies, while the one in~\eqref{eq:sorted_C0_A} does not.

The following result describes the asymptotic behavior of $\E [C(\xrand)]$, where $\E$ denotes expectation with respect to the random positions of sensors. Note that the cost $C(x)$ defined in~\eqref{eq:def-average-cost} is itself averaged with respect to sensor failures.

\begin{theorem}[Cost of random placement]\label{thm:random}
Let $\xrand$ be the above-defined random sensor placement. Then,
\[
 \E [C(\xrand)] = 
\frac{1}{2(1-p)} \frac{\log n}{n}  + O\!\left(\frac{1}{n}\right) \qquad \text{for $n\to\infty$}
\,.
\]
\end{theorem}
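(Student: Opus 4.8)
The plan is to mimic the structure of the proof of~\eqref{eq:exp-equally-theorem_simple}: identify the random quantity that governs the cost of a random placement, then compute its expectation asymptotically. When the positions $\xrand_1,\dots,\xrand_n$ are sorted, the cost $C(\xrand)$ is determined, as in~\eqref{eq:sorted_C0_A}, by the largest gap between consecutive \emph{active} sensors (plus the two boundary terms). Conditioning on the set $A$ of active sensors and on $|A|=m$, the active sensors are themselves $m$ i.i.d.\ uniform points in $[0,1]$; their sorted spacings (including the two end segments $[0,\min]$ and $[\max,1]$) are distributed as a uniform partition of $[0,1]$ into $m+1$ parts, i.e.\ as $(Y_1,\dots,Y_{m+1})$ with $Y_i = E_i/\sum_{j=1}^{m+1}E_j$ for i.i.d.\ exponentials $E_j$. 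The relevant quantity is then the maximum spacing $M_{m+1}=\max_i Y_i$, whose asymptotics are classical: $\E[M_{m+1}] = \frac{1}{m+1}\bigl(\log(m+1)+\gamma\bigr)+o(1/m)$, or more crudely $\E[M_{m+1}] = \frac{\log m}{m} + O(1/m)$, with a matching concentration bound. The contribution of the two boundary segments is $O(1/m)$ in expectation and can be absorbed into the error term, exactly as $\E L_{\mathrm i},\E L_{\mathrm f}$ were handled before.

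First I would make precise the statement ``$C_0(\xrand_A)$ is, up to the two boundary terms, half the maximum spacing among $|A|+1$ uniform spacings'', and record $\frac12 (M_{|A|+1}-\text{something}) \le C_0(\xrand_A) \le \frac12 M_{|A|+1} + (\text{boundary terms})$; combined with $C_0 \le 1$ always, this gives two-sided bounds. Second, I would take expectations: $\E[C(\xrand)] = \sum_A \Pr(E_A)\,\E[C_0(\xrand_A)]$, and use that $|A|$ is Binomial$(n,1-p)$. So the task reduces to understanding $\E\bigl[\tfrac{1}{2}M_{|A|+1}\bigr]$ where $|A|\sim\mathrm{Bin}(n,1-p)$. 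Third, I would plug in the maximum-spacing asymptotics conditionally on $|A|=m$ and average over $m$: heuristically $\E\bigl[\tfrac{\log |A|}{2|A|}\bigr]$, and since $|A|$ concentrates sharply around $(1-p)n$ (Chernoff: $\Pr(||A|-(1-p)n| > n^{2/3}) $ is super-polynomially small), one gets $\E\bigl[\tfrac{\log|A|}{2|A|}\bigr] = \frac{\log((1-p)n)}{2(1-p)n} + O(1/n) = \frac{1}{2(1-p)}\frac{\log n}{n} + O(1/n)$, the $\log(1-p)$ term being absorbed in $O(1/n)$. The events $|A|=0$ and $|A|=1$ contribute $p^n + n p^{n-1}(1-p)\cdot O(1)$, which is $o(1/n)$ and harmless.

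The main obstacle is making the averaging over the random value of $|A|$ rigorous on both sides, in particular controlling the lower tail: when $m$ is small the bound $\E[M_{m+1}]\sim \frac{\log m}{m}$ degrades, and one needs a uniform estimate such as $\E[M_{m+1}] \le \frac{C(\log(m+2))}{m+1}$ valid for all $m\ge 0$ (with the convention $M_1=1$), together with the fact that the total probability mass on $\{|A| \le (1-p)n/2\}$ is exponentially small, so that region contributes $o(1/n)$ even when weighted by the worst-case cost $1$. On the upper side one similarly needs that the boundary terms, summed against $\Pr(E_A)$, give $O(1/n)$ — this follows because, conditionally on $|A|=m\ge 1$, each end segment has mean $1/(m+1)$, and averaging $1/(|A|+1)$ over the Binomial gives $O(1/n)$ by the same concentration argument. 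I would cite a standard reference for the uniform-spacings maximum (e.g.\ the Pyke/Devroye line of results, analogous to how Lemma~\ref{lemma:Rn-properties} was quoted) rather than reprove it, and otherwise the argument is a routine concentration-plus-asymptotics computation parallel to \S~\ref{sect-proof-paolo}.
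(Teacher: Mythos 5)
Your proposal is correct and follows essentially the same route as the paper: decompose by the cardinality of the active set, identify $C_0(\xrandm)$ with (half) the maximum of the $m+1$ uniform spacings up to the two boundary segments, bound its expectation by harmonic numbers of order $\frac{H_m}{2(m+1)}$, and average over the $\mathrm{Binomial}(n,1-p)$ size of $A$ using Hoeffding/Chernoff concentration together with a worst-case bound of $1$ on the exponentially unlikely lower tail. The only cosmetic difference is that the paper derives the non-asymptotic bounds $\frac{H_{m+1}}{2(m+1)}\le\E[C_0(\xrandm)]\le\frac{H_{m-1}+4}{2(m+1)}$ self-containedly by inclusion--exclusion from the exact joint spacing distribution (Lemma~\ref{lem:cuttingrope}), whereas you quote the classical maximum-spacing asymptotics as a black box.
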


\medskip

From Theorem~\ref{thm:random} we can argue that $\E[C(\xrand)]$ has the same order of growth as $C(\xeq)$, but with a larger constant, thus leading to an asymptotically worse performance: this comparison is illustrated in Figure~\ref{fig:rand-vs-equi}. 
The rest of the section describes the main steps of the proof, while some lengthier details are postponed to the appendix.

\begin{figure}
\centering
\psfrag{uniform}{$C(\xeq)$}
\psfrag{random}{$\E[ C(\xrand)]$}
\psfrag{principal part}{$\frac1{2\log{p^{-1}}}\frac{\log{n}}{n}$}
\psfrag{principal part 2}{$\frac1{2 (1-p)}\frac{\log{n}}{n}$}

\psfrag{n}{$n$}
\psfrag{cost}{}
\includegraphics[width=.6\columnwidth]{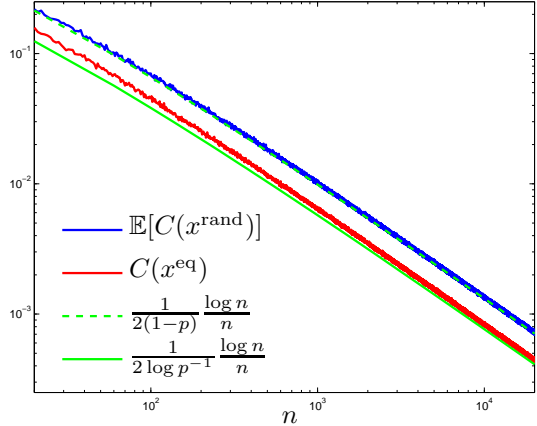}
\caption{The plot compares $\E [C(\xrand)]$, $C(\xeq)$, and their approximations according to Theorems~\ref{thm:random} and~\ref{th:equispace-logn}, respectively. The expected costs are simulated as Monte Carlo averages over 100 independent realizations of the placements and of the failures, taking $p=0.3$.}
\label{fig:rand-vs-equi}
\end{figure}

From the definition in~\eqref{eq:def-average-cost} and by linearity of expectation,
\[ \E[ C(\xrand)] = \sum_{A \subseteq [n]} \Pr(E_A) \E [C_0(\xrand_A)] \,. \]
The key remark is that all sets $A$ having the same cardinality $m$ have the same average cost $\E [C_0(\xrand_A)]$,
which corresponds to the average cost of a vector of $m$ active sensors in random positions.
Then, we define $\xrandm$ as a vector with $m$ entries $\xrandm_1, \dots, \xrandm_m$, i.i.d.\ uniform in $[0,1]$.
With this notation, $\E [C_0(\xrand_A)] = \E [C_0(\xrandm)]$ with $m = |A|$, so that
\begin{equation} \label{eq:cost-xrandm}
\E[ C(\xrand)] = \sum_{m=0}^n \Pr(|A|=m)  \E [C_0(\xrandm)] \,.
\end{equation}

Hence, we focus on finding bounds for $\E [C_0(\xrandm)]$. 
To do so, we make use of classic results about lengths of segments when cutting a rope at random points,
as described below.
We introduce the notation $V_1, \dots, V_{m+1}$ for the lengths of the segments obtained when cutting the $[0,1]$ interval at points from $\xrandm$.
More precisely, let $y = (y_1, \dots, y_m)$ be the vector obtained re-ordering entries of $\xrandm$ in non-decreasing order; also define $y_0 =0$ and $y_{m+1} =1$; finally define $V_i = y_i - y_{i-1}$, for $i=1, \dots, m+1$.
The average cost  $\E [C_0(\xrandm)]$ is related to the distribution of the segment lengths $V_1, \dots, V_{m+1}$, as follows.
\begin{lemma}  \label{lemma:EC0randm-rope}
For any $m \ge 1$,
\begin{equation} \label{eq:lemma-random-b}
 \E [C_0(\xrandm)] = \int_0^1  \Pr (C_0(\xrandm)>v) \mathrm{d}v \, ,
\end{equation}
where
$ \displaystyle 
\Pr (C_0(\xrandm)>v) = 
\Pr \left( \{V_1 > v \} \cup  \left\{ \frac{V_2}{2} > v \right\} \cup \dots \cup \left\{\frac{V_{m}}{2} > v \right\} \cup \{V_{m+1} > v \}  \right) 
$. \vspace{1mm}
\end{lemma}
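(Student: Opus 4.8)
The plan is to establish the two parts of Lemma~\ref{lemma:EC0randm-rope} separately: first the integral identity~\eqref{eq:lemma-random-b}, then the explicit formula for the tail probability $\Pr(C_0(\xrandm)>v)$ in terms of the segment lengths $V_i$.

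For the integral identity, I would simply invoke the standard ``layer-cake'' formula for the expectation of a nonnegative random variable, $\E Z = \int_0^\infty \Pr(Z>v)\,\mathrm{d}v$, applied to $Z = C_0(\xrandm)$. The only thing to check is that the upper limit of integration can be taken as $1$ rather than $+\infty$: this is because $C_0(\xrandm)\le 1$ almost surely, since for $m\ge 1$ every point of $[0,1]$ is within distance $1$ of any sensor (indeed $C_0(x_A)=\max_s\min_j|s-x_j|\le 1$ whenever $A\neq\emptyset$). Hence $\Pr(C_0(\xrandm)>v)=0$ for $v\ge 1$ and the tail integral truncates at $1$.

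For the tail formula, the key is the characterization~\eqref{eq:sorted_C0_A} of $C_0$ in terms of the sorted positions. Writing $y=(y_1,\dots,y_m)$ for the sorted version of $\xrandm$ and $y_0=0$, $y_{m+1}=1$, formula~\eqref{eq:sorted_C0_A} reads
\[
C_0(\xrandm) = \max\Bigl\{ y_1,\; 1-y_m,\; \max_{k=1,\dots,m-1}\tfrac12(y_{k+1}-y_k)\Bigr\}.
\]
Now substitute the segment lengths: $V_1 = y_1-y_0 = y_1$, $V_{m+1} = y_{m+1}-y_m = 1-y_m$, and $V_k = y_k-y_{k-1}$ so that $\tfrac12(y_{k+1}-y_k) = \tfrac12 V_{k+1}$ for $k=1,\dots,m-1$, i.e.\ $\tfrac12 V_j$ for $j=2,\dots,m$. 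Therefore
\[
C_0(\xrandm) = \max\Bigl\{ V_1,\; \tfrac{V_2}{2},\;\dots,\;\tfrac{V_m}{2},\; V_{m+1}\Bigr\}.
\]
The event $\{C_0(\xrandm)>v\}$ is then exactly the union of the events $\{V_1>v\}$, $\{V_2/2>v\},\dots,\{V_m/2>v\}$, $\{V_{m+1}>v\}$, since a maximum of finitely many quantities exceeds $v$ iff at least one of them does. Taking probabilities gives the displayed expression, and combining with the layer-cake identity completes the proof.

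I do not anticipate a genuine obstacle here — this lemma is essentially bookkeeping. The one point requiring a small amount of care is the boundary treatment: one must correctly match the ``endpoint'' terms $y_1$ and $1-y_m$ from~\eqref{eq:sorted_C0_A} with $V_1$ and $V_{m+1}$ (which are not divided by $2$), while the ``interior gap'' terms carry the factor $\tfrac12$; this is precisely why $V_1$ and $V_{m+1}$ appear without the halving in the union. It is also worth noting explicitly that the case $m=1$ is covered (then there are no interior terms, and $C_0 = \max\{V_1,V_2\} = \max\{y_1,1-y_1\}$, consistent with~\eqref{eq:sorted_C0_A}), which is why the hypothesis is stated as $m\ge 1$.
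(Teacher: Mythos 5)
Your proof is correct and follows the same route as the paper: the layer-cake (survival-function) identity for the expectation, truncated at $1$ because $C_0(\xrandm)\le 1$, together with the rewriting of~\eqref{eq:sorted_C0_A} in terms of the segment lengths to get $C_0(\xrandm)=\max\bigl(V_1,\tfrac{V_2}{2},\dots,\tfrac{V_m}{2},V_{m+1}\bigr)$. The paper states these two steps in one line each; your version merely fills in the same bookkeeping.
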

\begin{proof}
By computing the expectation as the integral of the survival function, \eqref{eq:lemma-random-b} immediately follows.
From~\eqref{eq:sorted_C0_A} applied to $y$, we have
$ 
C_0(\xrandm) = \max\left(V_1, \frac{V_2}{2}, \frac{V_3}{2}, \dots, \frac{V_{m}}{2}, V_{m+1}\right) 
$,
which implies the second equality.
\end{proof}

We will then take advantage of the following result about the distribution of the segment lengths $V_1, \dots, V_{m+1}$.
\begin{lemma}[{\cite[Sect.~6.4]{book_OrderStatistics}}] \label{lem:cuttingrope}
Let $V_1, \dots, V_{m+1}$ be the above-defined segment lengths.
Given $r\le m+1$ non-negative parameters  $c_1, \dots, c_r$ such that $\sum_i c_i \le 1$, and distinct indices $i_1,\dots, i_r \in [m+1]$, then
\[ \Pr(V_{i_1}>c_1, \dots, V_{i_r}>c_r)  = (1-c_1-\dots-c_r)^{m} \,.\]
\end{lemma}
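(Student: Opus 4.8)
The plan is to express the probability as a Euclidean volume, using the joint law of order statistics, and then to evaluate that volume by an explicit volume-preserving change of coordinates. I keep the notation of the excerpt: let $y=(y_1,\dots,y_m)$ be the vector of order statistics of the $m$ i.i.d.\ uniform points, with $y_0=0$, $y_{m+1}=1$ and $V_i=y_i-y_{i-1}$. It is classical that $y$ has the constant density $m!$ on the order simplex $\Delta_m(L):=\{0\le y_1\le\cdots\le y_m\le L\}$ for $L=1$, and that $\mathrm{vol}\,\Delta_m(L)=L^m/m!$ (the $m!$ orderings partition $[0,L]^m$ up to a null set). Relabelling the pairs $(i_1,c_1),\dots,(i_r,c_r)$, we may assume $i_1<i_2<\cdots<i_r$; then $\{V_{i_1}>c_1,\dots,V_{i_r}>c_r\}$ is a subset $S\subseteq\Delta_m(1)$, and the quantity to be computed is $\Pr(V_{i_1}>c_1,\dots,V_{i_r}>c_r)=m!\cdot\mathrm{vol}(S)$.

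Next I would introduce the map $\Phi\colon y\mapsto y'$ given by $y'_i:=y_i-\sum_{j:\,i_j\le i}c_j$; geometrically $\Phi$ deletes, for each $j$, a sub-interval of length $c_j$ lying inside the $i_j$-th gap. A one-line check gives $y'_{i_j}-y'_{i_j-1}=V_{i_j}-c_j$ for each $j$ and $y'_i-y'_{i-1}=V_i$ otherwise, where the boundary gaps are handled by $y'_0:=0$ and (when $i_r=m+1$) by reading the constraint $V_{m+1}>c_r$ as $y'_m\le 1-c$, with $c:=c_1+\cdots+c_r$. Hence $\Phi$ is a bijection from $S$ onto $\Delta_m(1-c)$, and being a triangular linear shift it has Jacobian determinant $1$. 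Therefore $\mathrm{vol}(S)=\mathrm{vol}\,\Delta_m(1-c)=(1-c)^m/m!$, and multiplying by $m!$ yields $\Pr(V_{i_1}>c_1,\dots,V_{i_r}>c_r)=(1-c)^m$, which is the claim.

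I do not expect any genuine obstacle here; the argument is essentially routine and the only points deserving care are bookkeeping ones: (i) treating the boundary gaps $V_1$ and $V_{m+1}$ on the same footing as interior gaps by consistently using $y_0=0$, $y_{m+1}=1$; and (ii) checking that $\Phi$ is onto $\Delta_m(1-c)$, which is immediate once one observes that the inverse of $\Phi$ (re-inserting the deleted pieces) sends any point of $\Delta_m(1-c)$ back into the constraint region $S$, up to a null set. If one prefers to sidestep the relabelling, one may instead invoke the exchangeability of $(V_1,\dots,V_{m+1})$ — evident from the representation $(V_1,\dots,V_{m+1})\overset{d}{=}(E_1,\dots,E_{m+1})/(E_1+\cdots+E_{m+1})$ with $E_k$ i.i.d.\ exponential, equivalently from the fact that the spacings are uniform on the standard simplex — to reduce at once to $i_j=j$, in which case $\Phi$ is simply $y'_i=y_i-(c_1+\cdots+c_{\min(i,r)})$.
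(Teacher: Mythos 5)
Your argument is correct. Note that the paper does not prove this lemma at all: it is quoted verbatim from David and Nagaraja's \emph{Order Statistics} (Sect.~6.4), so there is no in-paper proof to compare against. Your derivation is essentially the classical one for uniform spacings: represent the event through the order statistics $y$, whose density is the constant $m!$ on the order simplex, and observe that the shift $y'_i=y_i-\sum_{j:\,i_j\le i}c_j$ is a genuine translation of $\real^m$ (the subtracted vector is fixed), hence volume-preserving, carrying the constraint region onto $\{0\le y'_1\le\cdots\le y'_m\le 1-c\}$ of volume $(1-c)^m/m!$. The bookkeeping points you flag are indeed the only delicate ones, and you handle them adequately: the discrepancies between strict and non-strict inequalities at the boundary are null sets for the absolutely continuous law, the case where $m+1$ is not among the chosen indices still yields the upper constraint $y'_m\le 1-c$ because then all of $c$ has been subtracted from $y_m\le 1$, and the degenerate case $\sum_i c_i=1$ gives zero on both sides. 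The alternative reduction you mention via exchangeability of the spacings (e.g.\ through the normalized-exponentials representation) is also standard and would work, but it is not needed since your map already treats arbitrary index sets directly. In short, this is a valid, self-contained proof of the cited result, in the same spirit as the textbook argument the paper relies on.
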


The above lemmas, together with inclusion-exclusion principle, allow us to find the following bounds for $\E[ C_0(\xrandm)]$.
The bounds involve the harmonic numbers $H_m = \sum_{h=1}^m \frac{1}{h}$. 
The details of the proof are given in Appendix \ref{ap:boundsEC0}.
\begin{lemma} \label{lemma:bounds-EC0xrandm}
For all $m\ge 0$,
\[  \E[ C_0(\xrandm)] \ge
\frac{H_{m+1}}{2(m+1)} \,. \]
Moreover, for all $m \ge 2$,
 \[ \E[ C_0(\xrandm)] \le
\frac{H_{m-1} +4}{2(m+1)} \,. \]
\end{lemma}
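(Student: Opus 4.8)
\textbf{Proof plan for Lemma~\ref{lemma:bounds-EC0xrandm}.}

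The plan is to compute, via Lemma~\ref{lemma:EC0randm-rope}, the integral $\int_0^1 \Pr(C_0(\xrandm)>v)\,\mathrm{d}v$ by bounding the survival probability $\Pr(C_0(\xrandm)>v)$ above and below using inclusion--exclusion on the union of events $\{V_1>v\}\cup\{V_2/2>v\}\cup\dots\cup\{V_m/2>v\}\cup\{V_{m+1}>v\}$, and then exploiting the clean product formula of Lemma~\ref{lem:cuttingrope}, namely $\Pr(V_{i_1}>c_1,\dots,V_{i_r}>c_r)=(1-c_1-\dots-c_r)^m$ whenever $\sum_i c_i\le 1$. The two boundary events have ``threshold'' $v$ while the $m-1$ interior events have ``threshold'' $2v$; so a generic intersection of $j$ interior events and $\ell$ boundary events ($\ell\in\{0,1,2\}$) has probability $(1-2jv-\ell v)^m$ when $2jv+\ell v\le 1$ and $0$ otherwise. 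Integrating a term $(1-kv)^m$ over the range where it is positive, i.e.\ $v\in[0,1/k]$, gives exactly $\tfrac{1}{k(m+1)}$; this is the elementary computation that will produce the harmonic numbers after summing over the combinatorial coefficients.

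For the \textbf{lower bound}, I would use Bonferroni/inclusion--exclusion truncated at first order, i.e.\ the union bound in the reverse direction: $\Pr(\bigcup E_i)\ge \sum_i \Pr(E_i)-\sum_{i<j}\Pr(E_i\cap E_j)$ is not monotone enough, so instead I would simply note that the events $\{V_1>v\},\{V_2/2>v\},\dots,\{V_{m+1}>v\}$ cannot all be handled by the first Bonferroni term alone; a cleaner route is to observe that $\Pr(C_0(\xrandm)>v)\ge \Pr\big(\bigcup_{i=2}^{m}\{V_i/2>v\}\big)$ and apply inclusion--exclusion \emph{exactly} to these $m-1$ symmetric interior events, whose $j$-fold intersections each have probability $(1-2jv)^m$ (for $2jv\le1$). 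This yields $\Pr(C_0(\xrandm)>v)\ge \sum_{j=1}^{\lfloor 1/(2v)\rfloor}(-1)^{j+1}\binom{m-1}{j}(1-2jv)^m$, and integrating term by term over $v\in[0,1/2]$ gives $\sum_{j\ge1}(-1)^{j+1}\binom{m-1}{j}\tfrac{1}{2j(m+1)}=\tfrac{1}{2(m+1)}\sum_{j\ge1}\tfrac{(-1)^{j+1}}{j}\binom{m-1}{j}=\tfrac{H_{m-1}}{2(m+1)}$, using the classical identity $\sum_{j=1}^{N}\tfrac{(-1)^{j+1}}{j}\binom{N}{j}=H_N$. To reach the stated $\tfrac{H_{m+1}}{2(m+1)}$ I would add back the contribution of one boundary event, say $\{V_1>v\}$, handling its overlap with the interior union carefully; alternatively one can run inclusion--exclusion on the full $(m+1)$-event union and keep all terms, since that is an exact identity, then integrate and simplify to $\tfrac{H_{m+1}}{2(m+1)}$ directly — the bookkeeping is the only subtlety.

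For the \textbf{upper bound} ($m\ge2$), the union bound gives $\Pr(C_0(\xrandm)>v)\le 2\Pr(V_1>v)+\sum_{i=2}^m\Pr(V_i/2>v)=2(1-v)^m\mathbf{1}_{v\le1}+(m-1)(1-2v)^m\mathbf{1}_{v\le1/2}$; integrating yields $\tfrac{2}{m+1}+\tfrac{m-1}{2(m+1)}$, which is of order a constant, far too weak. So I would instead use a second-order Bonferroni bound $\Pr(\bigcup E_i)\le \sum\Pr(E_i)-\sum_{i<j}\Pr(E_i\cap E_j)+\dots$ truncated at an appropriate term, or better, bound $\Pr(C_0(\xrandm)>v)\le \Pr(\bigcup_{i=2}^{m}\{V_i/2>v\})+2\Pr(V_1>v)$ and estimate the interior union by inclusion--exclusion kept to enough terms to be an upper bound, i.e.\ truncating after an \emph{odd} number of sums. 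The dominant surviving term after integration is again $\tfrac{H_{m-1}}{2(m+1)}$ from the interior events (now as an upper bound, since the truncated alternating sum over-counts), and the boundary correction $2\int_0^1(1-v)^m\,\mathrm{d}v=\tfrac{2}{m+1}\le\tfrac{4}{2(m+1)}$ supplies the additive ``$+4$''. \textbf{The main obstacle} is making the inclusion--exclusion truncations genuinely one-sided: the alternating sum $\sum_j(-1)^{j+1}\binom{m-1}{j}(1-2jv)^m$ has terms that are only positive while $1-2jv\ge0$, so the ``$\lfloor 1/(2v)\rfloor$'' cutoff interacts with the Bonferroni parity, and one must verify that after integration the error terms collapse exactly into the harmonic-number identity rather than leaving an uncontrolled remainder. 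I expect this is handled by noticing that integrating $(1-kv)^m$ automatically kills it outside $[0,1/k]$, so the integral of the full alternating sum telescopes cleanly to $\tfrac{1}{2(m+1)}\sum_j\tfrac{(-1)^{j+1}}{j}\binom{m-1}{j}$ regardless of the $v$-dependent cutoff — which is the key simplification that makes both bounds come out with matching leading term $\tfrac{H_{m-1}}{2(m+1)}\sim\tfrac{\log m}{2m}$.
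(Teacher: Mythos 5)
Your upper-bound argument is essentially the paper's: split off the two boundary events by a union bound, $\Pr(C_0(\xrandm)>v)\le 2\Pr(V_1>v)+\Pr\bigl(\bigcup_{2\le i\le m}\{V_i/2>v\}\bigr)$, and treat the interior union by \emph{exact} inclusion--exclusion. No Bonferroni truncation is needed, so the parity/cutoff worry you raise does not arise: each term $(1-2rv)^m\mathbf{1}_{2rv\le 1}$ integrates to $\frac{1}{2r(m+1)}$ and the identity $\sum_{r=1}^{N}\frac{(-1)^{r-1}}{r}\binom{N}{r}=H_N$ does the rest, giving $\frac{2}{m+1}+\frac{H_{m-1}}{2(m+1)}=\frac{H_{m-1}+4}{2(m+1)}$ exactly as in the paper.

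The lower bound, however, has a genuine gap. Discarding the boundary events and running exact inclusion--exclusion on the $m-1$ interior events gives only $\frac{H_{m-1}}{2(m+1)}$, which is strictly weaker than the claimed $\frac{H_{m+1}}{2(m+1)}$ (and vacuous for $m\le 1$). Neither of your proposed repairs closes this. Exact inclusion--exclusion on the full $(m+1)$-event union does not ``simplify directly'': the events are not exchangeable (threshold $v$ for the two boundary segments, $2v$ for the interior ones), so the intersections have probabilities $(1-2jv-\ell v)^m$ and the resulting double sum is the \emph{exact} value of $\E[C_0(\xrandm)]$, not $\frac{H_{m+1}}{2(m+1)}$; showing that this exact value is at least $\frac{H_{m+1}}{2(m+1)}$ is then just the lemma again. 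Similarly, ``adding back one boundary event'' yields at most $m$ exchangeable events and hence $H_m$, not $H_{m+1}$. The missing idea is to \emph{shrink both boundary events}: since $\{V_1>2v\}\subseteq\{V_1>v\}$ and likewise for $V_{m+1}$, one has $\Pr(C_0(\xrandm)>v)\ge\Pr\bigl(\bigcup_{i=1}^{m+1}\{V_i/2>v\}\bigr)$, a union of $m+1$ exchangeable events all at threshold $2v$; exact inclusion--exclusion and termwise integration then give $\frac{H_{m+1}}{2(m+1)}$. (The case $m=0$ still needs the separate observation $C_0(\xrandm)=1\ge H_1/2$.)
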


Then, using Lemma~\ref{lemma:bounds-EC0xrandm}, we can find the following bounds for $\E (C(\xrand))$.
The proof is described in detail in Appendix \ref{ap:boundsCXrand}.
\begin{lemma}  \label{lemma:bounds-Cxrand}
For any $\eps \in (0,p)$,
\[ 
\E [C(\xrand)] \ge 
(1 - e^{-2 \eps^2 n}) \frac{H_{\lceil (1-p+\eps)n \rceil}}{2 \lceil(1-p+\eps)n \rceil}
\,. \]
Moreover, for any $\eps \in (0,1-p)$,
\[ 
\E [C(\xrand)] \le 
e^{-2 \eps^2 n} + 
	\frac{H_{\lceil (1-p-\eps)n \rceil} +4}{2 \lceil (1-p-\eps)n \rceil +2}  
\,. \] 
\end{lemma}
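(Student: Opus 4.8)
The plan is to relate $\E[C(\xrand)]$ to $\E[C_0(\xrandm)]$ through~\eqref{eq:cost-xrandm}, using Lemma~\ref{lemma:bounds-EC0xrandm} for the inner bounds and a concentration argument for the binomial random variable $|A|$. Recall that $|A| \sim \mathrm{Binomial}(n, 1-p)$, so by Hoeffding's inequality $\Pr(|A| \le (1-p-\eps)n) \le e^{-2\eps^2 n}$ and $\Pr(|A| \ge (1-p+\eps)n) \le e^{-2\eps^2 n}$. The idea is that $|A|$ is concentrated around $(1-p)n$, so the sum in~\eqref{eq:cost-xrandm} is dominated by terms with $m$ close to $(1-p)n$, for which $\E[C_0(\xrandm)] \approx \frac{H_{m}}{2m} \approx \frac{\log m}{2m} \approx \frac{1}{2(1-p)}\frac{\log n}{n}$.

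For the lower bound, I would first observe that $\E[C_0(\xrandm)] \ge \frac{H_{m+1}}{2(m+1)}$ is a \emph{decreasing} function of $m$ (since $\frac{H_{m+1}}{m+1}$ is eventually decreasing; one should check monotonicity, or just restrict to $m$ large). Hence, discarding all terms in~\eqref{eq:cost-xrandm} except those with $m \le \lceil(1-p+\eps)n\rceil$, and bounding $\E[C_0(\xrandm)]$ from below on that range by its value at $m = \lceil(1-p+\eps)n\rceil$, gives
\[
\E[C(\xrand)] \ge \Pr\bigl(|A| \le \lceil(1-p+\eps)n\rceil\bigr)\, \frac{H_{\lceil(1-p+\eps)n\rceil}}{2\lceil(1-p+\eps)n\rceil} \ge (1 - e^{-2\eps^2 n})\, \frac{H_{\lceil(1-p+\eps)n\rceil}}{2\lceil(1-p+\eps)n\rceil}\,,
\]
using $\Pr(|A| \ge (1-p+\eps)n) \le e^{-2\eps^2 n}$ from Hoeffding. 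Here one needs $\eps \in (0,p)$ so that $(1-p+\eps)n < n$ and the truncation is nontrivial.

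For the upper bound, I would split the sum in~\eqref{eq:cost-xrandm} at $m_0 = \lceil(1-p-\eps)n\rceil$. On the low-$m$ part ($m < m_0$, including $m=0,1$ where the per-term bound of Lemma~\ref{lemma:bounds-EC0xrandm} does not apply), bound $\E[C_0(\xrandm)] \le 1$ trivially and the total probability $\Pr(|A| < m_0) \le e^{-2\eps^2 n}$ by Hoeffding; this contributes at most $e^{-2\eps^2 n}$. On the high-$m$ part ($m \ge m_0 \ge 2$ for $n$ large), use $\E[C_0(\xrandm)] \le \frac{H_{m-1}+4}{2(m+1)}$, which is decreasing in $m$, hence bounded by its value at $m = m_0$; summing against $\Pr(|A|=m) \le 1$ total probability gives the term $\frac{H_{\lceil(1-p-\eps)n\rceil}+4}{2\lceil(1-p-\eps)n\rceil+2}$. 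Combining yields the stated bound, valid for $\eps \in (0,1-p)$ so that $m_0 < n$ and the high-$m$ range is nonempty.

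The main obstacle is purely bookkeeping: verifying that the relevant bounds $\frac{H_{m+1}}{2(m+1)}$ and $\frac{H_{m-1}+4}{2(m+1)}$ are indeed monotonically decreasing in $m$ over the ranges used (so that evaluating at the endpoint is legitimate), and handling the small-$m$ edge cases $m \in \{0,1\}$ separately since Lemma~\ref{lemma:bounds-EC0xrandm}'s upper bound requires $m \ge 2$. Neither is conceptually hard; the asymptotic statement of Theorem~\ref{thm:random} then follows by choosing, e.g., $\eps = \eps_n \to 0$ slowly enough that $e^{-2\eps_n^2 n} \to 0$ (say $\eps_n = n^{-1/4}$) and using $H_k = \log k + O(1)$ together with $\lceil(1-p\pm\eps_n)n\rceil = (1-p)n(1+o(1))$.
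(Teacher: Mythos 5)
Your proposal is correct and follows essentially the same route as the paper: the same truncation of~\eqref{eq:cost-xrandm}, the same use of Lemma~\ref{lemma:bounds-EC0xrandm} together with the monotonicity of $H_m/m$ and $(H_{m-1}+4)/(m+1)$, and the same Hoeffding bounds on $|A|$. The only slip is an off-by-one in the lower bound: keeping the terms with $m\le\lceil(1-p+\eps)n\rceil$ and evaluating the bound $\frac{H_{m+1}}{2(m+1)}$ at that endpoint yields $\frac{H_{\lceil(1-p+\eps)n\rceil+1}}{2(\lceil(1-p+\eps)n\rceil+1)}$ rather than the stated constant, so you should truncate at $m\le\lceil(1-p+\eps)n\rceil-1$, as the paper does.
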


The statement of Theorem~\ref{thm:random}  follows from Lemma~\ref{lemma:bounds-Cxrand}, by taking $\eps = \sqrt{ \frac{\log n}{n}}$ and by exploiting the fact that the asymptotic growth of harmonic numbers is $H_m \sim \log m$ for $m \to \infty$.

\section{Cort\'es model}\label{sect:cortes}

As mentioned in the Introduction, the paper~\cite{JC:12} studies the
coverage problem in one-dimension with the following failure model for the sensors. The number of failing sensors is known (and indicated with $k$) but {\em which} sensors fail is unknown and random: more precisely, the set of the $k$ failing sensors is sampled from a uniform distribution over the subsets of $\until{n}$ with $k$ elements. The problem can be summarized as follows.

\begin{prob}[Constant number of failures -- Cort\'es model]\label{prob:Cortes}
For given positive integers $k, n$ with $k<n$, find $x\in [0,1]^n$ that minimizes
$C(x) = \Pr(E_{\emptyset})+\sum_{A\neq\emptyset} \Pr(E_A) C_0(x_A),$
where  $C_0(x)$ is defined by~\eqref{eq:def_C0_xA} and $\Pr(E_A)= \prt{\frac{n!}{k!(n-k)!}}^{\! -1}$ if $\abs{A} = k$ and 0 otherwise.
\end{prob}

Observe that the only difference with our Problem~\ref{prob:mainprob}
is that the probabilities $\Pr(E_A)$ have changed. The following lemma
indicates how the Cort\'es model can be approximated by the
independent failure model.

\begin{lemma}\label{lem:bound_cortes_indep}
For any $x\in [0,1]^n$, $k<n$ and $0 < \epsilon < \min (k/n, 1-k/n)$,
$$
C^{\frac{k}{n}-\epsilon}(x) - e^{-2\epsilon n ^2}\leq C^{k,n}(x) \leq C^{\frac{k}{n}+\epsilon}(x)+e^{-2n \epsilon^2},
$$
were we use the notation $C^{k,n}$ for the cost of Cort\'es model and $C^p$ for the independent failure model. 
\end{lemma}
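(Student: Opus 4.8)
The plan is to compare the two failure models by controlling the number of failing sensors. Fix a placement $x \in [0,1]^n$ and recall that in the Cort\'es model the cost is $C^{k,n}(x) = \sum_{|A|=k} \binom{n}{k}^{-1} C_0(x_A)$, i.e.\ the average of $C_0(x_A)$ over all $A$ with $|A| = k$, while in the independent model $C^p(x) = \sum_{A} p^{n-|A|}(1-p)^{|A|} C_0(x_A)$. The key structural fact is that, conditionally on $|A| = j$, both models pick $A$ uniformly at random among the $j$-element subsets of $[n]$ (for the independent model this is immediate from $\Pr(E_A)$ depending only on $|A|$). Hence, if we write $f(j) := \mathbb{E}[\,C_0(x_A) \mid |A| = j\,]$ for the common conditional expectation, then $C^{k,n}(x) = f(k)$ and $C^p(x) = \sum_{j=0}^n \Pr(\mathrm{Bin}(n,1-p) = j)\, f(j)$. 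The Cort\'es cost is thus obtained by evaluating $f$ at the single point $k$, whereas the independent cost averages $f$ against a binomial distribution centered near $(1-p)n$.

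Next I would exploit monotonicity: $f(j)$ is non-increasing in $j$. Indeed adding one more active sensor can only reduce (weakly) the covering radius $C_0(x_A)$ for every fixed configuration, and one can couple the uniform $j$-subset with the uniform $(j+1)$-subset by adding one element; alternatively, $f$ monotone also follows from $(j{+}1) f(j{+}1) \le$ suitable averaging of $f(j)$ over the $(j{+}1)$-subsets. Combined with the trivial bound $0 \le C_0 \le 1$, this gives a clean strategy. For the upper bound, take $p' = \frac{k}{n} + \epsilon$ (failure probability), so the number of \emph{active} sensors is $\mathrm{Bin}(n, 1-p')$ with mean $(1-\frac{k}{n}-\epsilon)n = (n-k) - \epsilon n$. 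On the event $\{|A| \le k\}$ we bound $f(|A|) \le f(0) \le 1$ trivially, wait — more carefully: on $\{|A| \ge k\}$ monotonicity gives $f(|A|) \le f(k) = C^{k,n}(x)$, and on the complementary event we bound $f(|A|) \le 1$. Since the active count has mean below $n-k$, Hoeffding's inequality gives $\Pr(|A| \ge n-k) \ge \Pr(\mathrm{Bin}(n,1-p') \ge \mathbb{E} + \epsilon n) $... I must line this up so the deviation is $\epsilon n$: the mean number active under $p' = k/n + \epsilon$ is $(n-k)-\epsilon n$, so $\Pr(|A| < n-k)$ is not small. This suggests instead comparing $C^{k,n}$ to $C^{p}$ with $p = k/n \mp \epsilon$ via the active-count deviation being $\epsilon n$, which is exactly what Hoeffding's inequality $\Pr(|\mathrm{Bin}(n,q) - qn| \ge \epsilon n) \le 2 e^{-2\epsilon^2 n}$ (one-sided: $e^{-2\epsilon^2 n}$) controls.

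Concretely, for the upper bound $C^{k,n}(x) \le C^{k/n+\epsilon}(x) + e^{-2n\epsilon^2}$: with $p = \frac{k}{n}+\epsilon$, $C^p(x) = \mathbb{E}[f(|A|)]$ where $|A| \sim \mathrm{Bin}(n, 1-p)$ has mean $(n-k)-\epsilon n$. Split on $\{|A| \le k\}$: by monotonicity $f(|A|) \ge f(k) = C^{k,n}(x)$ there (more active sensors is the wrong direction — I want $|A| \le k$ to force $f(|A|) \ge C^{k,n}(x)$, correct since $f$ non-increasing). Then $C^p(x) \ge \Pr(|A| \le k) \cdot C^{k,n}(x) + 0 \ge (1 - e^{-2n\epsilon^2}) C^{k,n}(x) \ge C^{k,n}(x) - e^{-2n\epsilon^2}$, using $\Pr(|A| > k) = \Pr(\mathrm{Bin}(n,1-p) > k) = \Pr(\mathrm{Bin} > \mathbb{E} + \epsilon n) \le e^{-2n\epsilon^2}$ by Hoeffding, and $C^{k,n}(x) \le 1$. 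Rearranging yields the claim. Symmetrically, for the lower bound $C^{k/n-\epsilon}(x) - e^{-2n\epsilon^2} \le C^{k,n}(x)$: with $p = \frac{k}{n}-\epsilon$, $|A| \sim \mathrm{Bin}(n,1-p)$ has mean $(n-k)+\epsilon n$, split on $\{|A| \ge k\}$ where monotonicity gives $f(|A|) \le f(k) = C^{k,n}(x)$, so $C^p(x) \le \Pr(|A| \ge k) C^{k,n}(x) + \Pr(|A| < k) \cdot 1 \le C^{k,n}(x) + e^{-2n\epsilon^2}$, again by Hoeffding.

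The main obstacle is getting the monotonicity of $f(j) = \mathbb{E}[C_0(x_A)\mid |A|=j]$ rigorously and cleanly, together with making sure the direction of each inequality matches the direction in which $j$ moves relative to $k$; the probabilistic deviation estimate is then just a routine Hoeffding bound $\Pr(\mathrm{Bin}(n,q) \ge qn + t) \le e^{-2t^2/n}$ with $t = \epsilon n$, and the constraint $0 < \epsilon < \min(k/n, 1-k/n)$ is exactly what ensures $k/n \pm \epsilon \in (0,1)$ so that $C^{k/n\pm\epsilon}$ is well defined. I would also double-check the edge behavior: when $|A| = 0$ we use the convention $C_0(x_\emptyset) = 1$, which is consistent with the trivial bound $f(0) \le 1$ used above, so no special casing is needed. (The statement as written has $e^{-2\epsilon n^2}$ in one place, evidently a typo for $e^{-2n\epsilon^2}$, matching the other term and the analogous constant in Lemma~\ref{lemma:bounds-Cxrand}.)
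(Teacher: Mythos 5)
Your proposal follows essentially the same route as the paper: the engine in both cases is (i) monotonicity of the cost under enlarging the active set, (ii) a split of the independent-model cost according to whether the number of active sensors is above or below the Cort\'es threshold, with the trivial bound $C_0\le 1$ on the bad event, and (iii) Hoeffding's inequality for the binomial count. The paper phrases step (i) as an explicit coupling (build $B$ by independent selection, then add or remove uniformly chosen sensors to reach cardinality $k$), whereas you phrase it through the conditional expectation $f(j)=\E[C_0(x_A)\mid |A|=j]$ and its monotonicity; these are two presentations of the same idea, and yours is arguably the cleaner one.

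There is, however, one concrete slip in your final computation that you must fix. You commit to $C^{k,n}(x)=f(k)$, i.e.\ $A$ is the \emph{active} set with $|A|=k$, and then claim $\Pr(\mathrm{Bin}(n,1-p)>k)=\Pr(\mathrm{Bin}>\E+\epsilon n)$ for $p=\tfrac{k}{n}+\epsilon$. But with that convention the mean is $n-k-\epsilon n$, so the threshold $k$ sits at distance $2k-n+\epsilon n$ from the mean, not $\epsilon n$, and Hoeffding does not give $e^{-2n\epsilon^2}$ unless $k=n/2$. The argument only lines up if $k$ is the number of \emph{failed} sensors, so that $C^{k,n}(x)=f(n-k)$ and the relevant thresholds are at $n-k$, which is exactly $\epsilon n$ away from the means $n-k\mp\epsilon n$ for $p=\tfrac{k}{n}\pm\epsilon$. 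This reading is forced by the lemma itself (it compares against failure probabilities $\tfrac{k}{n}\pm\epsilon$) and by the text of Problem~\ref{prob:Cortes}, even though the displayed formula $\Pr(E_A)=\binom{n}{k}^{-1}$ for $|A|=k$ points the other way --- an inconsistency you inherited from the paper. With $f(n-k)$ in place of $f(k)$ throughout, both of your chains of inequalities are correct, and the rest (the coupling proof that $f$ is non-increasing, the use of $C^{k,n}\le 1$, the convention $C_0(x_\emptyset)=1$, and the observation that $e^{-2\epsilon n^2}$ is a typo for $e^{-2n\epsilon^2}$) is fine.
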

\begin{proof}
Problem~\ref{prob:Cortes} involves uniformly randomly selecting a subset $A$ of $k$ failed sensors among $n$ possible ones. One way of doing this is to first build a set $B$ obtained by selecting independently every sensor with a probability $p$. Then, if $\abs{B}>k$, one obtains $A$ by removing $\abs{B}-k$ uniformly randomly selected sensors from $B$. If on the other hand 
$\abs{B}<k$, one adds $k-\abs{B}$ randomly selected sensors to $B$. Observe that $A$ then always contains $k$ sensors, and that all sets $A$ with cardinality $k$ are equiprobable, so it is a valid selection process with respect to Problem~\ref{prob:Cortes}. 
The cost of $x$ can be decomposed as the contributions of the event $\abs{B} >k$ and $\abs{B} \leq k$.
\begin{equation}\label{eq:decompsition_cost_cortes}
C^{k,n}(x) = \E( C_0(x_A)) = \E\prt{ C_0(x_A)| \abs{B}\leq k} \Pr(\abs{B}\leq k)  + \E\prt{ C_0(x_A)| \abs{B}> k} \Pr(\abs{B}> k).
\end{equation}
When $\abs{B} \leq k$, the set $A$ contains the set $B$ from which it was built, and the cost $C_0(x_A)$ is thus smaller than or equal to $C_0(x_B)$. As a result, $\E\prt{ C_0(x_A)| \abs{B}\leq k} \leq \E\prt{ C_0(x_B)| \abs{B}\leq k}$. On the other hand, the cost $C_0(x_B)$ is always bounded by 1, and thus there always holds  $C_0(x_B) \leq 1 + C_0(x_A)$. In particular, $\E\prt{ C_0(x_A)| \abs{B}> k}\leq 1 + \E\prt{ C_0(x_A)| \abs{B}> k}$. Reintroducing these two bounds in~\eqref{eq:decompsition_cost_cortes} yields
\begin{align}
C^{k,n}(x) &\leq \E\prt{ C_0(x_B)| \abs{B}\leq k} \Pr(\abs{B}\leq k)  + \E\prt{ C_0(x_B)| \abs{B}> k} \Pr(\abs{B}> k) + \Pr(\abs{B}> k)\nonumber\\& = \E\prt{C_0(x_B)} + \Pr(\abs{B}> k) = C^p(x) +  \Pr(\abs{B}> k),\label{eq:almost_final_upper_bound_cortes}
\end{align}
where the last inequality follows from the fact that the sets $B$ are
built by randomly taking each sensor with a probability $p$ as in
Problem~\ref{prob:mainprob}.
Now the size of $B$ follows a binomial distribution with parameters $n$ and $p$. Hoeffding's inequality implies then that $\Pr(\abs{B}> k) \leq \exp(-2 \frac{(np-k)^2}{n})$. Taking $p=\frac{k}{n}-\epsilon$, we obtain $\Pr(\abs{B}> k) \leq e^{-2\epsilon^2n}$, and the upper bound of this lemma follows then from~\eqref{eq:almost_final_upper_bound_cortes}. The lower bound is obtained in a parallel way.
\end{proof}

Results analogous to those presented in the previous sections can then be obtained for the model in~\cite{JC:12}. We collect them in the following Theorem.
\begin{theorem}[Constant number of failures -- Cort\'es model]$ $
\begin{itemize}
\item[(a)] {\bf Linear program.} 
Theorem~\ref{prop:linprog} is directly valid for Problem~\ref{prob:Cortes}.
\item[(b)] {\bf Asymptotic cost of $\xeq$.}  For fixed $k/n$ and $n\to\infty$,
$C(\xeq)$ approximates $\frac{1}{2\log \frac{n}{k}}\frac {\log n}{n}.$
More precisely, for any $0<\epsilon<\min(k/n,
1-k/n)$ we have
$$\frac{1}{2\log \frac{n}{k-n\epsilon}}\frac {\log n}{n} +
O\left(\frac{1}{n}\right) \le C(\xeq) \le \frac{1}{2\log
  \frac{n}{k+n\epsilon}}\frac {\log n}{n} + O\left(\frac{1}{n}\right)
\quad \text{for $n\rightarrow\infty$},$$
where the $O(1/n)$ term can depend on $\epsilon$ and $k/n$.
\item[(c)] {\bf Near-optimality of $\xeq$.} 
Let $x^*$ be the optimal solution to Problem~\ref{prob:Cortes}. There holds $$ C(\xeq) \leq C(x^*) + \frac{2}{n}\frac{k}{n-k}.$$
\item[(d)] {\bf Asymptotic cost of $\xrand$.} The average cost of the random placement has the asymptotic behavior
\[ \E[C(\xrand)] = \frac{\log m}{2 m} + O\left( \frac{1}{m} \right)  \; \text{for $m \to \infty$} \,,\]
where $m=n-k$ is the number of active sensors.
\end{itemize}
\end{theorem}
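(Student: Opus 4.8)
\emph{Parts (a) and (b).} For (a) the plan is simply to observe that the proof of Theorem~\ref{prop:linprog} never uses the particular form~\eqref{eq:prob-A-p} of the probabilities: it only needs $\Pr(E_A)\ge 0$ for every $A$ and that $\Pr(E_\emptyset)$ is a constant not depending on $x$ (here $\Pr(E_\emptyset)=0$ since $k<n$). Both facts hold for the Cort\'es weights, and the epigraph reformulation introducing the variables $w_A$ and the constraints~\eqref{constr:def_cost_normal_in_thm_linprog}--\eqref{constr:def_cost_boundary_in_thm_linprog} then carries over word for word. For (b) I would apply Lemma~\ref{lem:bound_cortes_indep} at $x=\xeq$ to sandwich $C^{k,n}(\xeq)$ between $C^{k/n-\epsilon}(\xeq)$ and $C^{k/n+\epsilon}(\xeq)$ up to an additive $e^{-2\epsilon^2 n}$, and then insert the asymptotics~\eqref{eq:exp-equally-theorem_simple} of Theorem~\ref{th:equispace-logn} with $p=\tfrac{k}{n}-\epsilon$ and $p=\tfrac{k}{n}+\epsilon$ respectively, so that $\tfrac1{2\log p^{-1}}$ becomes $\frac{1}{2\log\frac{n}{k-n\epsilon}}$ and $\frac{1}{2\log\frac{n}{k+n\epsilon}}$. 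Since $e^{-2\epsilon^2 n}=o(1/n)$ for each fixed $\epsilon>0$, it is absorbed into the $O(1/n)$ remainder, which consequently depends on $\epsilon$ and $k/n$, exactly as stated.

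\emph{Part (c).} Here I would replay the circle argument of \S~\ref{sec:line-circle}. By Remark~\ref{rem:xeq-optimal}, $\xeq$ is the optimal placement (unique up to translation) for the circular variant of Problem~\ref{prob:Cortes}, because there $\Pr(E_A)$ is independent of the positions and invariant under permutations. Lemma~\ref{lem:compare_circle_line_one_set} concerns only the cost functions $C_0$ and $\tilde C_0$, so it is unaffected by the change of probabilities; hence the chain $\tilde C(\xeq)\le\tilde C(x^*)\le C(x^*)\le C(\xeq)$ of Lemma~\ref{lem:3bounds} remains valid, with $x^*$ now the optimum of Problem~\ref{prob:Cortes}. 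It then remains to redo Lemma~\ref{lem:bound_eq_line-eq_circle}: the pointwise estimate $C_0(\xeq_A)-\tilde C_0(\xeq_A)\le\frac1n\bigl(A_1-1+n-A_{|A|}\bigr)$ involves no probabilities, and summing it against the Cort\'es weights, using the symmetry $i\mapsto n+1-i$, gives $C(\xeq)-\tilde C(\xeq)\le\frac2n\,\E[A_1-1]$, where $A_1-1$ is now the number of failed sensors preceding the first active one when exactly $k$ of the $n$ fail uniformly at random. Since the $n-k$ active sensors split the line into $n-k+1$ ``gaps'', each receiving in expectation $\frac{k}{n-k+1}$ failures, we get $\E[A_1-1]=\frac{k}{n-k+1}\le\frac{k}{n-k}$, and combining with Lemma~\ref{lem:3bounds} yields $C(\xeq)\le C(x^*)+\frac2n\frac{k}{n-k}$.

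\emph{Part (d) and main obstacle.} For the random placement the key simplification is that in the Cort\'es model the number of active sensors is the deterministic value $m=n-k$. Conditioning on the uniformly chosen failed set and using that the coordinates $\xrand_1,\dots,\xrand_n$ are i.i.d.\ uniform and independent of the failures, the cost $C_0(\xrand_A)$ has exactly the law of $C_0(\xrandm)$, so $\E[C(\xrand)]=\E[C_0(\xrandm)]$ with no averaging over $|A|$ (in contrast with~\eqref{eq:cost-xrandm}). Feeding this into Lemma~\ref{lemma:bounds-EC0xrandm} bounds it between $\frac{H_{m+1}}{2(m+1)}$ and $\frac{H_{m-1}+4}{2(m+1)}$, and $H_m=\log m+O(1)$ turns both into $\frac{\log m}{2m}+O(1/m)$, which is the claim. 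Overall I expect no real difficulty: the only genuinely new ingredient is the computation of $\E[A_1-1]$ for the Cort\'es model in part (c), and even that is a standard ``balls in gaps'' argument; the remaining work is just checking that the lemmas of the earlier sections were phrased generally enough, which they were.
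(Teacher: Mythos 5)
Your proposal is correct and follows essentially the same route as the paper for all four parts; in particular, for (c) you supply the ``balls in gaps'' computation $\E[A_1-1]=\frac{k}{n-k+1}\le\frac{k}{n-k}$ that the paper only alludes to as ``a variation of Lemma~\ref{lem:bound_eq_line-eq_circle}''. One tiny imprecision: for $k>0$ the probability that all sensors are active is zero, so Remark~\ref{rem:xeq-optimal} only guarantees that $\xeq$ is \emph{an} optimum of the circle problem, not the unique one up to translation as you state --- but only optimality is needed in the chain of inequalities, so nothing breaks.
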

\begin{proof}
(a) The proof of Theorem~\ref{prop:linprog} does not depend on the values of the probabilities $\Pr(E_A)$. It applies thus directly to other models of probabilities, including that of Problem~\ref{prob:Cortes}. Moreover, the polytope of admissible solutions does not depend on $\Pr(E_A)$ either. Therefore, whenever the optimal solution is unique, it must belong to the (finite) set of vertices of that polytope, independently of the model.

(b) This part of the result is obtained by combining the bound \eqref{eq:exp-equally-theorem_simple} in Theorem~\ref{th:equispace-logn} with Lemma~\ref{lem:bound_cortes_indep}.

(c) The proof follows the reasoning held in
\S~\ref{sec:perf_equidist}. Specifically we can introduce a
variation of Problem~\ref{prob:Cortes} on the
circle. As explained in Remark \ref{rem:xeq-optimal}, Proposition~\ref{prop:equidist_optimal_cycle} implies then that
$\xeq$ is an optimal solution of that problem (though not necessarily
the only one since the probability for all sensors to be active is
zero if $k>0$). Lemmas~\ref{lem:compare_circle_line_one_set}
and~\ref{lem:3bounds} can then directly be extended with the same
proof, so that $\tilde C(\xeq) \leq C(x^{*})\leq C(\xeq)$. The bound
(c) follows then from a variation of
Lemma~\ref{lem:bound_eq_line-eq_circle} showing that  $C(\xeq) -
\tilde C(\xeq)\leq\frac{2}{n}\frac{k}{n-k}$. 

(d) Similarly to the proof of Theorem~\ref{thm:random}, we get
$ \E C_0(\xrand_A) = \E  C_0(\xrandm) $ for any $A$ with $|A|=m$,
and hence also $\E C(\xrand) = \E C_0(\xrandm)$.
Then, applying Lemma~\ref{lemma:bounds-EC0xrandm}, we get
\[  \frac{H_{m+1}}{2(m+1)} 
\le \E C(\xrand) \le 
\frac{H_{m-1} +4}{2(m+1)} \,,\]
which concludes the proof.
\end{proof}

Our results on the asymptotic behavior of solutions to Problem~\ref{prob:Cortes} complement those in~\cite{JC:12}, which focus on general properties of the optimization problem and on deriving explicit formulas for certain values of $k,n$.

\section{Conclusion}\label{sect:conclusion}

In this paper we have presented our findings on a new
model of coverage by unreliable sensors, which extends the well-known
disk-coverage problem to allow for independent sensor failures. We have shown that the resulting optimization problem is a linear program,
thus solvable by standard methods. However, since the space of possible solutions grows exponentially with the size of the problem,
we do not know whether a solution can be found in a polynomial time.
Although the optimal solution can possibly be hard to find, and even if its properties are difficult to describe precisely, we have been
able to present a suboptimal solution which asymptotically achieves the optimal performance as the number of sensors grows to infinity.
Remarkably, this near-optimal solution is just the equispaced placement, which is optimal in the case without failures.
We have also compared the performance of random sensor placement to
the equispaced setting to find that there is a constant factor
deterioration of the cost: nevertheless, the rate of growth is the same as the
number of sensors increases.

This paper opens several research directions. The first natural direction is the extension to higher dimensions. As mentioned in the introduction, the coverage performance of two-dimensional sensor networks has been extensively investigated. Consistently with the results in~\cite{SK-THL-JB:04} and some preliminary results that we have obtained, we believe that both $C(\xeq)$ and $C(\xrand)$ are asymptotically proportional to $\sqrt{\frac{\log n}{n}}$. However, characterising the proportionality constant and its optimality is an open question. In fact, our optimality analysis hinges on the assumption of dimension one: crucially, the linear programming characterisation is unlikely to effectively extend to higher dimensions.
Secondly, in this paper we have chosen a min-max disk-coverage cost: different cost functions would lead to interesting alternative problems. For instance, one can consider the weighted integral of a non-decreasing function of the distance to the closest sensor. 
Thirdly, one might consider the case of heterogeneous sensors, where the failure probability can depend on the sensor itself or on its location.
Finally, a challenging question is finding feedback control laws that enable autonomous deployment of self-propelled sensors, in such a way to take random failure into account. This problem has been recently studied in relation to Cort\'es model in~\cite{MO-JC:13}, but is completely open for the failure model proposed in this paper.

\appendix

\section{Proof of Proposition~\ref{prop:equidistsmallp} }
\label{ap:propequidistsmall}
We propose the alternative sensor placement
\newcommand{\xalt}{\supscr{x}{alt}}
$$\xalt = \frac{1}{2n-2}(1,2,4,6,\ldots, 2n-2, 2n-4, 2n-3)$$ and we show that, for some $p=c/n$ and for sufficiently large $n$,
this placement gives a better (expected) cost than $\xeq$.
In order to do so, we estimate the cost difference $C(\xalt) - C(\xeq)$.

We first compare the cost difference for any fixed set of active sensors $A$.
If $A$ is empty, the two costs are trivially the same.
Now consider a non-empty fixed $A$,
and let $k$ be the length of the longest sequence of consecutive failed sensors among the \emph{middle} ones $2,3,\ldots,n-1$.
In the following, we are going to prove the following bounds:
\begin{itemize}
\item[(a)] $\displaystyle C_0(\xalt_A) - C_0(\xeq_A) \leq I_k:=\frac{k+1}{2n(n-1)}$;
\item[(b)] if $k\le 1 $ and sensor 1 fails, then $\displaystyle C_0(\xalt_A) - C_0(\xeq_A)\leq -J := -\frac{n-5}{2n(n-1)}$.
\end{itemize}
Both bounds are based on the following observation:
\begin{equation} \label{eq:xalt}
C_0(\xeq_A) = \max\left\{ \frac{k+1}{2n}, \frac{2h+1}{2n} \right\}\,,
\quad
C_0(\xalt_A) \le \max\left\{ \frac{k+1}{2(n-1)}, \frac{h}{n-1} \right\}\,,
\end{equation}
where $h = \max\{ A_1-1, n-A_{|A|}\}$ is the longest between the runs of failures involving the first and last sensor. 
Notice that $0 \le h \le k+1$.

To prove (a), consider two cases. If $k \ge 2h$, then both maxima in \eqref{eq:xalt} are achieved by the term involving $k$, and
$C_0(\xalt_A) - C_0(\xeq_A) \le \frac{k+1}{2(n-1)} - \frac{k+1}{2n}= I_k$. If $k < 2h$, then 
both maxima in \eqref{eq:xalt} are achieved by the term involving $h$, and
$C_0(\xalt_A) - C_0(\xeq_A) \le \frac{h}{n-1} - \frac{2h+1}{2n} = \frac{2h - (n-1)}{2n(n-1)}$; the claim then follows using 
in the numerator the bounds $h \le k+1$ and $n-1 \ge k+1$ (the latter is true since by definition $k \le n-2$).

To prove (b), notice that the assumption that sensor 1 fails implies $h \ge 1$; 
also recall that $h \le k+1$, and that by assumption $k \le 1$, so that we have
$k \le 1 \le h \le 2$. In this case, both maxima in \eqref{eq:xalt} are achieved by the term involving $h$, and
$C_0(\xalt_A) - C_0(\xeq_A) \le  \frac{2h - (n-1)}{2n(n-1)}$; the claim follows from the bound $h \le 2$.

Now we come back to the averaged costs.
We denote by $E_k$ the set of sets $A$ for which the longest sequence of failed sensors among the middle ones has length $k$, 
and by $F_1$ the set of sets $A$ for which the sensor $1$ fails. 
We study 
\[ C(\xalt_A) - C(\xeq_A) 
= \sum_{k=0}^{n-2} \Pr(E_k) \E [ C_0(\xalt_A) - C_0(\xeq_A)  | E_k ]  \,. \]
For all terms with $k \ge 2$, we use the bound (a) to get $\E [ C_0(\xalt_A) - C_0(\xeq_A)  | E_k ] \le I_k$.
For $k=0$ and $k=1$, we separate the case where sensor 1 fails or is active:
\begin{align*}
&\sum_{k=0}^{1} \Pr(E_k) \E [ C_0(\xalt_A) - C_0(\xeq_A)  | E_k ] \\
=&\sum_{k=0}^{1} \Pr(E_k\cap F_1) \E [ C_0(\xalt_A) - C_0(\xeq_A)  | E_k\cap F_1 ] +
\sum_{k=0}^{1} \Pr(E_k\cap \bar F_1) \E [ C_0(\xalt_A) - C_0(\xeq_A)  | E_k\cap \bar F_1 ]  .
\end{align*}
For the first term, we can use the tighter bound (b), to get 
$ \E [ C_0(\xalt_A) - C_0(\xeq_A)  | E_k\cap F_1 ] \le -J$; for the second term we use bound (a),
together with the remark that $I_0 < I_1$, to get $\E [ C_0(\xalt_A) - C_0(\xeq_A)  | E_k\cap \bar F_1 ] \le I_1$.
Notice that $E_0$ and $E_1$ are disjoint, and that $F_1$ is and independent event from any $E_k$ since sensor failures are independent by assumption,
with $\Pr(F_1) = p$. Hence, we have 
$\sum_{k=0}^{1} \Pr(E_k\cap F_1) \E [ C_0(\xalt_A) - C_0(\xeq_A)  | E_k\cap F_1 ] 
\le 
- \Pr(E_0 \cup E_1) p J$ and
$\sum_{k=0}^{1} \Pr(E_k\cap \bar F_1) \E [ C_0(\xalt_A) - C_0(\xeq_A)  | E_k\cap \bar F_1 ]
\le 
\Pr(E_0 \cup E_1) (1-p) I_1$.

Collecting all terms, we have
\[
C(\xalt) - C(\xeq)\leq 
\sum_{k=2}^{n-2} \Pr(E_k) I_k + \Pr(E_0 \cup E_1)(1-p)I_1 - \Pr(E_0 \cup E_1)p \,,
\]
which we can re-write as
$$
C(\xalt) - C(\xeq)\leq \underbrace{\sum_{k=2}^{n-2} \Pr(E_k) I_k - \Pr(E_0 \cup E_1)  p \frac{J}{2}}_{(\alpha)} \\
 + \Pr(E_0 \cup E_1) \underbrace{((1-p)I_1 - p\frac{J}{2})}_{(\beta)}.
$$
We now show that $(\alpha),(\beta)$ are both negative when $p=\frac{c}{n}$ for a suitable $c$ and sufficiently large $n$.
Substituting the values of $I_1$ and $J$ in $(\beta)$ leads to 
$
(\beta) =\frac{1}{4n(n-1)}\prt{4-pn+p},
$
which is negative for sufficiently large $n$ when $p= c/n$ for any $c>4$. 

To analyze $(\alpha)$, we start by bounding $\Pr(E_k)$. There are $n-k-1$ possible sequences of~$k$ consecutive middle sensors and the probability that all the sensors fail in one such sequence is $p^k$. Therefore, 
\begin{equation}\label{eq:bound_Ek}
\Pr(E_k) \leq (n-k-1)p^k<n p^k
\end{equation}
and as a consequence
\begin{equation}\label{eq:bound_E0E1}
\Pr(E_0\cup E_1) = 1 - \sum_{k=2}^{n-2}\Pr(E_k) > 1 - \sum_{k=2}^\infty n p^k=1 - n\frac{p^2}{1-p}.
\end{equation}
The first part of inequality \eqref{eq:bound_Ek} allows bounding the first term in $(\alpha)$:
\begin{align*}
\sum_{k=2}^{n-2}\Pr(E_k)I_k 
&< \sum_{k=2}^{n-2} (n-k-1)p^k \frac{k+1}{2n(n-1)} 
< \frac{1}{2n}\sum_{k=2}^{n-2} p^k (k+1)\\
 &< \frac{1}{2n}\left( \frac{1}{(1-p)^2} - 2p- 1\right) 
 = \frac{3p^2-2p^3}{2n(1-p)^2}.
\end{align*}
Re-introducing this bound in $(\alpha)$ and using \eqref{eq:bound_E0E1}  leads then to 
\begin{align*}
(\alpha) &<  \frac{3p^2-2p^3}{2n(1-p)^2} - p\left(1 -
  n\frac{p^2}{1-p}\right)\frac{n-5}{4n(n-1)}\\
&= \frac{p}{2n}\left(\frac{3p-2p^2}{(1-p)^2} - \left(1 -
  n\frac{p^2}{1-p}\right)\frac{n-5}{2(n-1)}\right).
\end{align*}
Choosing $p=c/n$ for any positive $c$, the expression in the parentheses converges to $-1/2$
as $n\rightarrow \infty$. Therefore it is negative for large enough
$n$,  which is what we needed.

Now, let us fix some $c >4$. We have shown above that there exists a $n_0$ such that, for any $n \ge n_0$, if $p= \frac{c}{n}$ then 
$C_0(\xalt) < C_0(\xeq)$. This shows that for $n \ge n_0$ the size of the neighborhood of $p=0$ where $\xeq$ is optimal is at most $c/n$.
On the other hand, for $n < n_0$, trivially the size of such neighborhood is at most $1 < n_0/n$. Hence, for any $n$, such size is at most $c_0/n$
with $c_0 = \max(c,n_0)$.

%
\section{Proof of Proposition~\ref{prop:equidistlargep}}
\label{ap:propequidistlarge}

The result is trivial for $n\leq 3$, so we assume in the sequel that $n>3$. We compare the cost of the single cluster with another candidate with
three clusters as follows.\\
  \begin{center}
  \begin{tikzpicture}
    \draw[|-|] (0,0)--(8,0);
    \draw[fill] (2,0) circle [radius = 0.05];
    \draw[fill] (4,0) circle [radius = 0.05];
    \draw[fill] (6,0) circle [radius = 0.05];

    \node [below] at (2,-.1) {$k$};
    \node [below] at (4,-.1) {$n-2k$};
    \node [below] at (6,-.1) {$k$};

    \node [above] at (0,.1) {$0$};
    \node [above] at (2,.1) {$1/4$};
    \node [above] at (4,.1) {$1/2$};
    \node [above] at (6,.1) {$3/4$};
    \node [above] at (8,.1) {$1$};
  \end{tikzpicture}
  \end{center}
The numbers below the dots indicate the number of sensors aggregated
at that point, $k$ will be chosen later. If we show that this new placement is better than the
single cluster for a certain $p$, it implies that having a single
cluster is not optimal. For the single cluster, the cost is always $1/2$. For the three
clusters we get

{\centering
\begin{tabular}{cl}
  1/4 & if the left and right clusters are active,\\
  1/2 & if the left and/or right cluster fails, but the middle cluster
  is active,\\
  3/4 & if the left or right and the middle cluster fails.\\
\end{tabular}\\}
We get less than $1/2$ in expectation if the probability of getting $1/4$ is higher than getting
$3/4$. The relation needed for the probabilities is
$$(1-p^k)^2 > 2p^k p^{n-2k}(1-p^k).$$
Multiplying by $p^k/(1-p^k)$ this is equivalent to
\begin{equation}
 p^k(1-p^{k}) > 2p^n.\label{eq:highpcomparison}
\end{equation}
We need to confirm this inequality with an appropriate choice of $k$.
If $p\le 1/3$, then~\eqref{eq:highpcomparison} holds with $k=1$
(and $n>3$). Otherwise, observe that 
\begin{equation}
2p^n < 2\left(1-\frac{3}{n}\right)^n < 2e^{-3} <\frac{3}{16}.\label{eq:highpcomp1}
\end{equation}
We have to choose $p^k$ from the sequence
$p,p^2,\ldots,p^{\lfloor n/2\rfloor}$. This sequence starts at $p> 1/3$ and ends at
$p^{\lfloor n/2\rfloor} < 1/2$, and the ratio of consequent elements is greater than
$1/3$. Therefore there is an element $p^k$ in the interval $(1/4, 3/4)$.
The left hand side of~\eqref{eq:highpcomparison} is a quadratic function
in $p^k$ so it is easy to verify that
$$p^k \in \left(\frac{1}{4}, \frac{3}{4}\right) \quad \Longrightarrow
\quad p^k(1-p^{k}) > \frac{3}{16}.$$
Combining this with~\eqref{eq:highpcomp1} we arrive at~\eqref{eq:highpcomparison}, which completes our proof.

\section{Proof of Lemma~\ref{lemma:bounds-EC0xrandm}}
\label{ap:boundsEC0}

We start by proving the lower bound.
The case $m=0$ is true, since in this case the cost is 1, and $H_1 = 1$ so that $\frac{H_1}{2}<1$.
Then consider $m \ge 1$.
From Lemma~\ref{lemma:EC0randm-rope} we obtain the following lower bound
\[  \Pr (C_0(\xrandm)>v) \ge  
\Pr \left(\bigcup_{i=1}^{m+1} \left\{\frac{V_i}{2} > v \right\} \right)\,.\]
Using inclusion-exclusion principle and applying Lemma~\ref{lem:cuttingrope} with
$c_1 = \dots = c_{r} = 2v$, we obtain
\[ \Pr (C_0(\xrandm)>v) \ge 
\sum_{1\le r\le m+1 \text{ s.t. } 2rv < 1} (-1)^{r-1} \binom{m+1}{r} (1-2rv)^{m} \,.\]
Then, substituting this in~\eqref{eq:lemma-random-b}, we get
\[ \E C_0(\xrandm) \ge 
\sum_{1\le r \le m+1 } (-1)^{r-1} \binom{m+1}{r} \int_0^{\frac{1}{2r}}  (1-2rv)^m \mathrm{d}v  \,.\]
By computing
$\int_0^{\frac{1}{2r}}  (1-2rv)^{m} \mathrm{d}v = \frac{1}{2r(m+1)}$ and recalling that
$\sum_{1\le r \le m +1} (-1)^{r-1} \binom{m+1}{r} \frac{1}{r} = H_{m+1}$,
we end the proof of the lower bound.

For the upper bound we proceed similarly.
By Lemma~\ref{lemma:EC0randm-rope} and the union bound, we get
\[ \Pr(C_0(\xrandm)>v) \le 
\Pr (V_1 > v)+\Pr (V_{m+1} > v)+  \Pr \left(\bigcup_{2\le i \le m} \{\tfrac{V_i}{2} > v\}\right) \,,\]
and then, by Lemma~\ref{lem:cuttingrope} 
\[\Pr (V_1 > v) = \Pr (V_{m+1} > v) =  (1-v)^{m} \]
and by the same lemma together with inclusion-exclusion principle, 
\[ \Pr \left(\bigcup_{2\le i \le m} \{\tfrac{V_i}{2} > v\} \right)
= \sum_{1\le r\le m-1 \text{ s.t. }  2rv \le 1} (-1)^{r-1} \binom{m-1}{r} (1-2rv)^m\,.\]
From this and using~\eqref{eq:lemma-random-b}, we get
\begin{align*}
\E C_0(\xrandm)
& \le 2 \int_0^1 (1-v)^{m} \mathrm{d}v
	+  \sum_{r=1}^{m-1} (-1)^{r-1} \binom{m-1}{r}  \int_{0}^{\frac{1}{2r}} (1-2rv)^m \mathrm{d}v \\
& = 2 \frac{1}{m+1} + \sum_{r=1}^{m-1} (-1)^{r-1} \binom{m-1}{r} \frac{1}{2(m+1)r} \\
& = \frac{2}{m+1} + \frac{H_{m-1}}{2(m+1)} \,,
\end{align*}
which proves the upper bound.

\section{Proof of Lemma~\ref{lemma:bounds-Cxrand}}
\label{ap:boundsCXrand}

To get the lower bound, we consider~\eqref{eq:cost-xrandm}. By discarding terms with large $m$ and using Lemma~\ref{lemma:bounds-EC0xrandm}, we get
\begin{align*}
\E C(\xrand) &\ge 
\sum_{m=0}^{\lceil(1-p+\eps) n \rceil-1} \Pr(|A|=m)  \frac{H_{m+1}}{2(m+1)} \\ &\ge
\Pr(|A| < \lceil(1-p+\eps) n \rceil) 
	\min_{m < \lceil(1-p+\eps) n \rceil} \frac{H_{m+1}}{2(m+1)}
\end{align*}
It is easy to show
that $\frac{H_m}{m}$ is decreasing with $m$, so that 
\[ \min_{m < \lceil(1-p+\eps) n \rceil} \frac{H_{m+1}}{2(m+1)} = \frac{H_{\lceil(1-p+\eps) n \rceil}}{\lceil(1-p+\eps) n \rceil} \,.\]
Then,
\[\Pr(|A| < \lceil(1-p+\eps) n \rceil) =
1 - \sum_{\lceil(1-p+\eps) n \rceil}^n \binom{n}{m} (1-p)^m p^{n-m} =
1 - \sum_{m'=0}^{\lfloor(p-\eps) n \rfloor} \binom{n}{m'} p^{m'} (1-p)^{n-m'} \]
and,
 by Hoeffding inequality,  
 $\sum_{m'=0}^{\lfloor (p-\eps) n \rfloor} \binom{n}{m'} p^{m'} (1-p)^{n-m'} \le e^{-2 \eps^2 n}$,
 which ends the proof of the lower bound.

For the upper bound, we proceed similarly.
From now on, we assume that $\lfloor (1-p-\eps) n \rfloor \ge 2$; notice that the bound is trivially true otherwise.
We consider~\eqref{eq:cost-xrandm} and we split the summation in two terms:
a first term with $m \le \lfloor (1-p-\eps) n \rfloor$, in which we use the trivial bound $C_0(\xrandm) \le 1$,
and the remaining sum in which we use the upper bound from Lemma~\ref{lemma:bounds-EC0xrandm}, as follows
\[ \E C(\xrand) \le
\Pr\left(|A| \le \lfloor(1-p-\eps) n \rfloor\right)
+ \Pr\left(|A| > \lfloor(1-p-\eps) n \rfloor\right) 
		\max_{m > \lfloor (1-p-\eps) n \rfloor} \frac{4+H_{m-1}}{2(m+1)} \,. \]
By Hoeffding inequality, $\Pr\left(|A| \le \lfloor(1-p-\eps) n \rfloor\right) \le e^{-2 \eps^2 n}$.
For the second term,
it is easy to show that 
$ \frac{H_{m-1}+4}{m+1} $ is decreasing with $m$
and hence
\[\max_{m > \lfloor (1-p-\eps) n \rfloor} \frac{4+H_{m-1}}{2(m+1)}
= \frac{H_{\lfloor (1-p-\eps)n \rfloor}+4}{2(\lfloor (1-p-\eps)n \rfloor+2)}\,.\]
Finally we use the trivial bound $\Pr\left(|A| > \lfloor(1-p-\eps) n \rfloor\right)  \le 1$.

Then, the formulation of the upper bound stated in the proposition, which is slightly weaker
but has the advantage of not explicitly requiring to assume $\lfloor (1-p-\eps) n \rfloor \ge 2$, is obtained since
$ H_{\lfloor (1-p-\eps)n \rfloor} \le  H_{\lceil (1-p-\eps)n \rceil}$ and in the denominator $2\lfloor (1-p-\eps)n \rfloor +4 \ge 2 \lceil (1-p-\eps)n \rceil + 2$.

\bibliographystyle{ieeetran}


\end{document}